\providecommand{\email}[1]{\href{mailto:#1}{\nolinkurl{#1}}}
\setlist[enumerate,1]{label={(\roman*)}}
\setlist[enumerate,2]{label={(\alph*)}}
\setlist[enumerate,3]{label={(\Roman*)}}
\newcommand{\newsstheorem}[2]{
  \newaliascnt{#1}{dummy}
  \newtheorem{#1}[#1]{#2}
  \aliascntresetthe{#1}
  \expandafter\def\csname #1autorefname\endcsname{#2}
}
\numberwithin{dummy}{section}
\theoremstyle{plain}
  \theoremstyle{definition}
\theoremstyle{remark}
\newenvironment{eqnarr*}{\begin{IEEEeqnarray*}{rCl}}{\end{IEEEeqnarray*}\ignorespacesafterend}
\newcommand\RR{\mathbb{R}}
\newcommand\PP{\mathbb{P}}
\newcommand\EE{\mathbb{E}}
\newcommand\NN{\mathbb{N}}
\newcommand\e{{\mathrm e}}
\newcommand\mathof[1]{{\operator@font#1}} \makeatother
\newcommand\dd{\mathof{d}}
\begin{document}

\title{How linear reinforcement  affects  \\
Donsker's Theorem for empirical processes}
\author{Jean Bertoin\footnote{Institute of Mathematics, University of Zurich, Switzerland, \texttt{jean.bertoin@math.uzh.ch}}  }
\date{\small Dedicated to the memory of Harry Kesten, \\ for the deep mathematics he gave us.}
\maketitle 
\thispagestyle{empty}

\begin{abstract} 
{A reinforcement algorithm introduced by H.A. Simon \cite{Simon} produces a sequence of uniform random variables with memory 
as follows. At each step, with a fixed probability $p\in(0,1)$, $\hat U_{n+1}$ is sampled uniformly from $\hat U_1, \ldots, \hat U_n$, and with complementary probability $1-p$, $\hat U_{n+1}$ is a new independent uniform variable. The Glivenko-Cantelli theorem remains valid  for the reinforced empirical measure, but not the Donsker theorem.
Specifically, we show that the sequence of empirical processes converges in law to a Brownian bridge only up to a constant factor when $p<1/2$, and  that a further rescaling is needed  when  $p>1/2$ and the limit is then a bridge with exchangeable increments and discontinuous paths. This is related to earlier limit theorems for correlated Bernoulli processes, the so-called elephant random walk, and more generally step reinforced random walks.
 }
\newline  \vskip 1mm
{\normalfont \bfseries Keywords:}
Donsker's Theorem, empirical process, linear reinforcement,  bridges with exchangeable increments.\newline
\vskip 1mm
{\normalfont \bfseries Mathematics Subject Classification:}  60F17; 62G30

\end{abstract}

\section{Introduction}\label{s:intro}

 A classical result of Glivenko and Cantelli in 1933 states that  the sequence of empirical distribution functions associated to  i.i.d. copies of some real random variable converges uniformly to its cumulative distribution function, almost surely. Nearly 20 years later, Donsker determined the asymptotic behavior of the fluctuations; let us recall his result.
Let  $U_1, U_2, \ldots$ be  i.i.d.  uniform random variables on $[0,1]$; then  the sequence of (uniform) empirical processes,
$$
G_n(x)=\frac{1}{\sqrt n} \sum_{i=1}^n\left( {\mathbf 1}_{U_i \leq x}-x\right), \qquad x\in [0,1],
$$
converges in distribution as $n\to \infty$ in the sense of Skorokhod towards a Brownian bridge  $(G(x))_{0\leq x \leq 1}$.
The purpose of the present work is to analyze how Donsker's Theorem is affected by an elementary random reinforcement algorithm that we shall now describe. 

Consider a sequence $\varepsilon_2, \varepsilon_3, \ldots$ of i.i.d. Bernoulli variables with fixed parameter $p\in(0,1)$. These variables determine when repetitions occur, in the sense that 
the $n$-th step of the algorithm is a repetition  if $\varepsilon_n=1$, and  an innovation if $\varepsilon_n=0$.
For every $n\geq 2$, 
let also $v(n)$ be a uniform random variable on $\{1, \ldots, n-1\}$ such that $v(2), v(3), \ldots$ are independent;
these variables specify which of the preceding items is copied when a repetition occurs.
More precisely, we set $\varepsilon_1=0$ for definitiveness and construct recursively a sequence of random variables $\hat U_1, \hat U_2, \ldots$ by setting
$$\hat U_n=\left\{
\begin{matrix}
\hat U_{v(n)}&\text{ if }\varepsilon_n=1, \\
U_{{\mathrm i}(n)}&\text{ if }\varepsilon_n=0,\\
\end{matrix}
\right.
$$
where $${\mathrm i}(n)=\sum_{j=1}^n(1-\varepsilon_j)\qquad\text{for }n\geq 1$$
denotes the total number of innovations after $n$ steps. 
We always assume without further mention that the sequences $(v(n))_{n\geq 2}$, $(\varepsilon_n)_{n\geq 2}$, 
and $(U_j)_{j\geq 1}$ are independent. 

This random algorithm  has been introduced  in 1955 by Herbert A. Simon \cite{Simon}, who singled out in this setting 
 a remarkable one-parameter family of power tail distributions on $\NN$ that  arise  in a variety of data.
 Nowadays,  Simon's algorithm 
 should be  viewed as a linear reinforcement procedure,
in the sense that, provided that 
 ${\mathrm i}(n)\geq j$  (i.e. the variable $U_j$ as already appeared at the $n$-th step of the algorithm), the probability that $U_j$ is repeated at the $(n+1)$-th step is proportional to the number of its previous occurrences. In this direction, we refer henceforth to the parameter $p$ of the Bernoulli variables $\varepsilon_n$ as the reinforcement parameter.

Obviously, each variable $\hat U_n$ 
has the uniform distribution on $[0,1]$; note however that the reinforced sequence $(\hat U_n)_{n\geq 1}$ is  clearly not stationary, and is not exchangeable or even partially exchangeable either. 
It is easy to show that nonetheless, the conclusion of the  Glivenko-Cantelli theorem is still valid in this framework:
\begin{proposition}\label{P:GlivCant} With probability one, it holds that
$$\lim_{n\to \infty} \sup_{0\leq x \leq 1}  \left| \frac{1}{n}  \sum_{i=1}^n\left( {\mathbf 1}_{\hat U_i\leq x}-x\right) \right|=0.$$
\end{proposition}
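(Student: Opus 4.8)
The plan is to reduce the statement to the classical Glivenko--Cantelli theorem via the genealogical structure of Simon's algorithm, and to quantify the effect of reinforcement by a single second moment estimate. Unrolling the defining recursion, each $\hat U_i$ equals exactly one of the innovations $U_j$; writing $N_j(n)=\#\{i\le n:\hat U_i=U_j\}$ for the number of occurrences among the first $n$ items of the $j$-th innovation, one has $\sum_j N_j(n)=n$ and $\sum_{i\le n}\mathbf 1_{\hat U_i\le x}=\sum_j N_j(n)\mathbf 1_{U_j\le x}$ for every $x$. The key point is that $(N_j(n))_{j,n}$ is a deterministic function of $(\varepsilon_n)$ and $(v(n))$ alone, hence independent of $(U_j)$. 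Setting $\hat F_n(x)=n^{-1}\sum_{i\le n}\mathbf 1_{\hat U_i\le x}$ and using $\sum_jN_j(n)=n$ gives $n(\hat F_n(x)-x)=\sum_jN_j(n)(\mathbf 1_{U_j\le x}-x)$; conditioning on $(\varepsilon,v)$, under which the variables $\mathbf 1_{U_j\le x}-x$ are i.i.d., centered, with variance $x(1-x)$, yields the identity $\mathbb E[(\hat F_n(x)-x)^2]=x(1-x)\,n^{-2}\,\mathbb E[S(n)]$, where $S(n):=\sum_jN_j(n)^2$.

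Next I would bound $\mathbb E[S(n)]$. Since at step $n+1$ cluster $j$ receives a new unit with conditional probability $p\,N_j(n)/n$ while a fresh singleton cluster appears with probability $1-p$, a short computation gives $\mathbb E[S(n+1)]=(1+2p/n)\mathbb E[S(n)]+1$, hence $\mathbb E[S(n)]=O(n^{\max(2p,1)}\log n)$. The essential feature is that $p<1$ forces the exponent $\max(2p,1)$ to be strictly below $2$, so $\mathbb E[(\hat F_n(x)-x)^2]\le C_p\,n^{-\delta}\log n$ with $\delta:=2-\max(2p,1)>0$; in particular $\hat F_n(x)\to x$ in $L^2$ for each fixed $x$.

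To promote this to almost sure pointwise convergence I would use a standard subsequence argument: fixing $\rho>1$ and $n_k=\lceil\rho^k\rceil$, Markov's inequality makes $\sum_k\mathbb P(|\hat F_{n_k}(x)-x|>\varepsilon)$ summable, so $\hat F_{n_k}(x)\to x$ a.s.; since $n\mapsto n\hat F_n(x)$ is non-decreasing, sandwiching $\hat F_n(x)$ between $(n_k/n)\hat F_{n_k}(x)$ and $(n_{k+1}/n)\hat F_{n_{k+1}}(x)$ for $n_k\le n<n_{k+1}$ and letting $\rho\downarrow1$ along a sequence gives $\hat F_n(x)\to x$ a.s., simultaneously for all $x$ in a countable dense subset of $[0,1]$. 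One then concludes as in the classical proof: partition $[0,1]$ into finitely many intervals $[x_{l-1},x_l]$ of mesh $<\delta$, and use monotonicity of $\hat F_n$ and of the identity limit to get $\sup_{0\le x\le1}|\hat F_n(x)-x|\le\max_l|\hat F_n(x_l)-x_l|+\delta$, then let $\delta\downarrow0$.

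The only ingredient beyond the i.i.d. case is the second moment bound, and the one place that could fail is the control of $S(n)=\sum_jN_j(n)^2$: reinforcement could in principle make a single cluster macroscopic and push $S(n)$ up to order $n^2$, which would wreck the argument. That this does not happen as long as $p<1$ is precisely what the recursion $\mathbb E[S(n+1)]=(1+2p/n)\mathbb E[S(n)]+1$ encodes, and this is the step I expect to be the crux. (An essentially equivalent route is to note that $a_n(\hat F_n(x)-x)$ is a martingale for a deterministic sequence $a_n\asymp n^{1-p}$ and to run a Doob-inequality estimate over dyadic blocks; this avoids writing the recursion explicitly but relies on the same comparison.)
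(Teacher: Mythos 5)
Your proposal is correct and follows essentially the same route as the paper: both reduce the statement to the pointwise law of large numbers via the decomposition $\sum_{i\le n} \mathbf 1_{\hat U_i\le x}=\sum_j N_j(n)\mathbf 1_{U_j\le x}$, express the variance as $x(1-x)\,\EE(\mathcal S^2(n))$ by conditioning on the reinforcement variables, control $\EE(\mathcal S^2(n))$ through the recursion $\EE(\mathcal S^2(n+1))=(1+2p/n)\EE(\mathcal S^2(n))+1$, and conclude by Chebyshev and Borel--Cantelli along a subsequence followed by a monotonicity argument. The only (immaterial) difference is that the paper takes the polynomial subsequence $n=k^r$ where you take a geometric one.
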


We are chiefly interested in  the empirical 
processes $\hat G_n$ associated to the reinforced sequence
$$\hat G_n(x)=\frac{1}{\sqrt n} \sum_{i=1}^n\left( {\mathbf 1}_{\hat U_i \leq x}-x\right), \qquad x\in [0,1].$$
Our main result shows that their asymptotic behavior as $n\to \infty$ exhibits a phase transition for the critical parameter $p_c=1/2$. Roughly speaking, when the reinforcement parameter $p$ is smaller than $1/2$, then 
the analog of Donsker's Theorem holds for $\hat G_n$,  except that the limit
is now only proportional to the Brownian bridge.  At criticality, i.e. for $p=1/2$, convergence in distribution to the Brownian bridge holds after an additional rescaling of $\hat G_n$ by a factor $1/\sqrt{\log n}$. Finally for $p>1/2$, $n^{-p+1/2}\hat G_n$ now converges in probability and its limit is described in terms of some bridge with exchangeable increments and discontinuous sample paths.

Here is a precise statement, where the needed background on bridges with exchangeable increments in the supercritical case $p>1/2$ is postponed to the next section. Recall that $G=(G(x))_{0\leq x \leq 1}$ denotes the standard Brownian bridge. We further write ${\mathbb D}$ for the space of c\`adl\`ag paths $\omega: [0,1]\to \RR$ endowed  with the Skorokhod topology
(see Chapter 3 in \cite{Billing} or Chapter VI in \cite{JS}). The notation
$ \Rightarrow$ is used to indicate convergence  in distribution of a sequence of processes in ${\mathbb D}$. 
\begin{theorem} \label{T1} The following convergences hold as $n\to \infty$:
\begin{enumerate}
\item[(i)] If $p<1/2$, then 
$$\hat G_n \ \Longrightarrow \ \frac{1}{\sqrt{1-2p}}\, G.$$
\item[(ii)] If $p=1/2$, then  
$$\frac{1}{\sqrt{\log n}}\, \hat G_n \ \Longrightarrow \ G.$$
\item[(iii)] If $p>1/2$, then 
$$\lim_{n\to \infty} n^{-p+1/2}\hat G_n = B^{(p)} \qquad \text{
in probability on }{\mathbb D},$$
 where $B^{(p)}=(B^{(p)}(x))_{0\leq x \leq 1}$ is the bridge with exchangeable increments described in the forthcoming Definition \ref{D1}.
\end{enumerate}
\end{theorem}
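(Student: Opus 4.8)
The whole argument is organised around a genealogical reading of the construction. By definition every $\hat U_i$ equals exactly one of the innovation variables, say $\hat U_i=U_{a(i)}$, where the \emph{ancestral index} $a(i)\in\{1,\dots,\mathrm i(n)\}$ is a measurable function of the reinforcement data $(\varepsilon_k,v(k))_{k\ge2}$ only, hence independent of $(U_j)_{j\ge1}$. Writing $m_j^{(n)}=\#\{i\le n:a(i)=j\}$ for the size of the $j$-th family (so $\sum_j m_j^{(n)}=n$), one gets the identity
$$
\hat G_n(x)=\frac1{\sqrt n}\sum_{j=1}^{\mathrm i(n)} m_j^{(n)}\bigl(\mathbf 1_{U_j\le x}-x\bigr),
$$
so that \emph{conditionally on the reinforcement data} $\hat G_n$ is a weighted uniform empirical process built from independent centred summands. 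Everything follows once the family sizes are understood, the basic bookkeeping quantity being $R_n:=\EE\bigl[\sum_j(m_j^{(n)})^2\bigr]$. Since $\sum_j(m_j^{(n)})^2=n+2\sum_{i<i'}\mathbf 1_{a(i)=a(i')}$, and $q(i,i'):=\PP(a(i)=a(i'))$ satisfies $q(i,i')=\frac p{i'-1}\sum_{k<i'}q(i,k)$ for $i<i'$, summation gives the linear recursion $R_n=\frac{n-1+2p}{n-1}R_{n-1}+1$, $R_1=1$, whence $R_n\sim\frac1{1-2p}n$ if $p<\frac12$, $R_n\sim n\log n$ if $p=\frac12$, and $R_n\sim c_p\,n^{2p}$ if $p>\frac12$. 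As $\EE[\hat G_n(x)\hat G_n(y)]=\frac1n R_n\,(x\wedge y-xy)$, this already pins down the normalisations $\sqrt n,\sqrt{n\log n},n^{p}$ and the Brownian-bridge covariance (rescaled by $\frac1{1-2p}$) in the first two regimes.

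\textbf{Cases $p\le1/2$.}
I would prove the finite-dimensional convergence conditionally on the reinforcement data. By Cram\'er--Wold, $(\hat G_n(x_1),\dots,\hat G_n(x_d))$ is, given that data, a normalised sum of independent, uniformly bounded, centred vectors; the Lindeberg--Feller theorem applies provided $\max_j m_j^{(n)}=o(\sqrt{R_n})$ and $\frac1n\sum_j(m_j^{(n)})^2$ concentrates near $\frac1{1-2p}$ (resp.\ $(\log n)^{-1}\frac1n\sum_j(m_j^{(n)})^2\to1$ at criticality). The first point holds since the largest family has size of order $n^{p}=o(\sqrt n)$ (resp.\ $o(\sqrt{n\log n})$), the second from the estimate $\EE\bigl[(\sum_j(m_j^{(n)})^2)^2\bigr]=R_n^2(1+o(1))$, again a consequence of a genealogical recursion. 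The conditional Gaussian limit being deterministic, bounded convergence removes the conditioning and delivers the finite-dimensional distributions of $\frac1{\sqrt{1-2p}}G$, resp.\ $G$. (Alternatively one may quote the one-dimensional limit theorems for step-reinforced random walks applied to the Bernoulli innovations $\mathbf 1_{U\le x}$.) For tightness in $\mathbb D$, from the conditional representation and with $W_j=\mathbf 1_{x<U_j\le y}-(y-x)$,
$$
\EE\bigl[(\hat G_n(y)-\hat G_n(x))^4\bigr]\le\frac{y-x}{n^2}\,\EE\Bigl[\sum_j(m_j^{(n)})^4\Bigr]+\frac{3(y-x)^2}{n^2}\,\EE\Bigl[\bigl(\sum_j(m_j^{(n)})^2\bigr)^2\Bigr],
$$
and after dividing by the square of the normalisation this feeds into Billingsley's tightness criterion \cite{Billing} for c\`adl\`ag processes of empirical type, which tolerates the $O((y-x)/n)$ term coming from the $1/n$-scale jumps; the needed orders of $\EE[\sum_j(m_j^{(n)})^4]$ and $\EE[(\sum_j(m_j^{(n)})^2)^2]$ are obtained from further linear recursions.

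\textbf{Case $p>1/2$.}
Here the convergence is in probability because the renormalised family sizes stabilise almost surely. Once the $j$-th innovation has appeared, at step $\tau_j$, one has $\EE[m_j^{(n+1)}\mid\mathcal F_n]=\frac{n+p}{n}m_j^{(n)}$, so $m_j^{(n)}\Gamma(n)/\Gamma(n+p)$ (normalised at $\tau_j$) is a nonnegative $L^2$-bounded martingale, giving $n^{-p}m_j^{(n)}\to\xi_j\ge0$ a.s.\ and in $L^2$; Fatou applied to $n^{-2p}R_n\to c_p$ yields $\EE[\sum_j\xi_j^2]\le c_p<\infty$, so $\sum_j\xi_j^2<\infty$ a.s. Setting $B^{(p)}(x):=\sum_{j\ge1}\xi_j(\mathbf 1_{U_j\le x}-x)$, the series converges in $L^2$ and defines a c\`adl\`ag process, which is precisely a bridge with exchangeable increments in Kallenberg's representation, with jump sizes $(\xi_j)$ at the i.i.d.\ uniform locations $(U_j)$, and one checks it is the announced $B^{(p)}$. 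Finally, since $n^{-p+1/2}\hat G_n(x)-B^{(p)}(x)=\sum_j(n^{-p}m_j^{(n)}-\xi_j)(\mathbf 1_{U_j\le x}-x)$, one splits the sum at a large index $N$: the finitely many leading terms converge to $0$ uniformly in $x$ by a.s.\ convergence of $n^{-p}m_j^{(n)}$ (the jump locations being fixed), while for the tail one uses that $x\mapsto(1-x)^{-1}\sum_{j>N}a_j(\mathbf 1_{U_j\le x}-x)$ is a martingale in $x$ to get $\EE[\sup_x(\sum_{j>N}a_j(\mathbf 1_{U_j\le x}-x))^2]\lesssim\sum_{j>N}a_j^2$, applied with $a_j=n^{-p}m_j^{(n)}$ and $a_j=\xi_j$; monotonicity in $n$ of the martingale second moments makes the bound uniform in $n$. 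Letting $N\to\infty$ gives uniform, hence $\mathbb D$-, convergence in probability.

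\textbf{Main difficulty.}
The genuinely delicate step is the tightness when $p\le1/2$: one must produce increment moment bounds that are uniform in $n$ and, simultaneously in all three $p$-regimes, sharp enough to survive the $1/n$-scale discontinuities of $\hat G_n$ once inserted into Billingsley's criterion, which hinges on getting the exact orders of the higher genealogical moments $\EE[\sum_j(m_j^{(n)})^4]$ and $\EE[(\sum_j(m_j^{(n)})^2)^2]$. In the supercritical case the corresponding subtlety is identifying the constant $c_p=\lim_n n^{-2p}R_n$ with $\EE[\sum_j\xi_j^2]$, which is exactly what powers the uniform-integrability step and hence the upgrade from convergence in law to convergence in probability.
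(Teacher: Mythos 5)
Your route is genuinely different from the paper's. The paper observes that $\hat G_n$ is itself a bridge with exchangeable increments with canonical representation $0$ and $\beta_{n,j}=N_j(n)/\sqrt n$, and then invokes Kallenberg's convergence criterion (Lemma \ref{L1}), which reduces the whole of Theorem \ref{T1} to two scalar statements: $\sup_j\beta_{n,j}\to0$ in probability and $\sum_j\beta_{n,j}^2\to\sigma^2$. In particular no tightness estimate is ever needed. You instead condition on the reinforcement data and run a Lindeberg--Feller argument for the finite-dimensional laws plus a Billingsley moment criterion for tightness. This is workable in principle, and your supercritical argument (martingale convergence of $n^{-p}N_j(n)$, splitting the series at a large index $N$, and a maximal inequality for the tail, with $\sum_{j>N}\sup_n\EE[(n^{-p}N_j(n))^2]\to0$) is essentially a correct, self-contained substitute for the paper's Corollary \ref{C4} plus Lemma \ref{L1}(ii). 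But your approach pays a price: it needs strictly more information about the cluster sizes than the paper's (fourth moments and the variance of ${\mathcal S}^2(n)=\sum_jN_j(n)^2$), and all of these inputs --- $\max_jN_j(n)=o(\sqrt n)$, the concentration $\EE[({\mathcal S}^2(n))^2]=(1+o(1))(\EE[{\mathcal S}^2(n)])^2$, and the order of $\EE[\sum_jN_j(n)^4]$ --- are asserted via ``genealogical recursions'' rather than proved. These are precisely where the paper's technical work lives (Simon's frequency result, the Vitali--Scheff\'e argument of Lemma \ref{L5}, the Yule-time-change martingale of Lemma \ref{L6}, the moment recursions of Lemma \ref{L7}), so as written the proposal defers the hard part.

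There is also one step that would fail as stated. Your tightness bound reads
\begin{equation*}
\EE\bigl[(\hat G_n(y)-\hat G_n(x))^4\bigr]\le\frac{y-x}{n^2}\,\EE\Bigl[\sum_j N_j(n)^4\Bigr]+\frac{3(y-x)^2}{n^2}\,\EE\Bigl[\bigl({\mathcal S}^2(n)\bigr)^2\Bigr],
\end{equation*}
and you claim the first (diagonal) term is of the ``$O((y-x)/n)$'' type that Billingsley's criterion tolerates for empirical processes. That is only true for small $p$: since $\EE[\sum_jN_j(n)^4]\asymp n^{\max(1,4p)}$ (the largest clusters have size $\asymp n^p$), the diagonal term is of order $(y-x)\,n^{4p-2}$ for $p\ge1/4$. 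The classical repair --- restrict to a grid of mesh $\delta_n$ on which $(y-x)\,n^{4p-2}\le(y-x)^2$ and control the oscillation inside a cell directly --- requires both $\delta_n\gtrsim n^{4p-2}$ and $\sqrt n\,\delta_n\to0$ for the drift part, which is impossible once $p\ge3/8$; and unlike the i.i.d.\ case the jumps of $\hat G_n$ are not of size $n^{-1/2}$ but as large as $n^{p-1/2}$. The fix is to use the product form of the criterion over disjoint intervals, $\EE[(\hat G_n(y)-\hat G_n(x))^2(\hat G_n(z)-\hat G_n(y))^2]\le C(z-x)^2$, where the diagonal contribution $\sum_jN_j(n)^4\,\EE[V_j^2W_j^2]$ already carries the factor $(y-x)(z-y)$ and the bound $\EE[\sum_jN_j(n)^4]=o(n^2)$ (valid for all $p<1/2$, and $O(n^2)$ at $p=1/2$ against the extra $(\log n)^2$ in the normalization) suffices. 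So the tightness step is repairable, but not by the argument you give; alternatively, recognizing the exchangeable-increment structure and quoting Kallenberg, as the paper does, removes the issue entirely.
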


Our approach to Theorem \ref{T1} relies, at least in part,  on a natural interpretation of Simon's algorithm in terms of Bernoulli bond percolation on random recursive trees.
Specifically, we view  $\{1,2,\ldots,n\}$  as a set of vertices and each pair $(j,v(j))$ for $j=2, \ldots, n$ as edges; the resulting graph ${\mathbb T}_n$ is known as the random recursive tree of size $n$, see Section 1.3 and Chapter 7 in \cite{Drmota}.  We next delete each edge $(j,v(j))$ if and only if $\varepsilon_j=0$, in other words we perform a Bernoulli bond percolation with parameter $p$ on ${\mathbb T}_n$. The percolation clusters are then given by subsets of indices at which the same variable is repeated, namely $\{i\leq n: \hat U_i=U_j\}$ for $j=1, \ldots , {\mathrm i}(n)$. 

The sum of the squares of the cluster sizes
$${\mathcal S}^2(n)=\sum_{j\geq 1}N_j(n)^2, \qquad \text{with }N_j(n) =\#\{i\leq n: \hat U_i=U_j\},$$
lies at the heart of  the analysis of the reinforced empirical process $\hat G_n$. We shall see that its asymptotic behavior is given by
\begin{equation}\label{E:expt}
{\mathcal S}^2(n)\sim \left\{ 
\begin{matrix} n/(1-2p) &\text{ if }p<1/2,\\
n\log n &\text{ if }p=1/2,\\
n^{2p}R&\text{ if }p>1/2,\\
\end{matrix}
\right.
\end{equation}
where $R$ is some non-degenerate random variable. A rough explanation for the phase transition\footnote{Somehow, the fact that percolation on random recursive trees exhibits a  phase transition with critical parameter $p_c=1/2$ bears  a flavor similar  to Kesten's celebrated achievement \cite{kesten80}  for bond percolation
on the square lattice. However, this resemblance is purely coincidental and superficial.} in \eqref{E:expt} is that the main contribution to ${\mathcal S}^2(n)$ is due to a large number of microscopic clusters  in the sub-critical case $p<1/2$, and rather to a few mesoscopic clusters of size $\approx n^p$ in the super-critical case $p>1/2$. 
Even though  \eqref{E:expt} is not quite sufficient to establish Theorem \ref{T1}, it is nonetheless a major step for its proof.
More precisely,  we shall rely on general results due to Kallenberg \cite{Kal2} on the structure of processes with exchangeable increments and explicit criteria for the 
 weak convergence of sequences of the latter, and \eqref{E:expt} appears as a key element in this setting. 

The rest of this work is organized as follows. Section 2 is devoted to several preliminaries. We shall first present some key results due to Kallenberg on bridges with exchangeable increments and their canonical representations. We shall then  recall a limit theorem for the numbers of occurrences $N_j(n)$ which have been obtained in the framework of Bernoulli percolation on random recursive trees as well as the fundamental result of H.A. Simon about the frequency of microscopic clusters. Last, we shall compute  explicitly the average $\EE({\mathcal S}^2(n))$ using a simple recurrence identity and establish Proposition \ref{P:GlivCant} on our way. Theorem \ref{T1} is then proven in Section 3. Finally, in Section 4, we discuss some connections between Theorem \ref{T1} and closely related results in the literature on step-reinforced random walks, including correlated Bernoulli processes and the so-called elephant random walk.

\section{Preliminaries}
\subsection {Bridges with exchangeable increments}
This section is adapted from Kallenberg \cite{Kal2}, who rather uses the terminology  interchangeable instead of exchangeable, and whose results are given in a more general setting. We also refer to \cite{Kal1} for many interesting  properties of the sample paths of such processes. 

Let $B=(B(x))_{0\leq x \leq 1}$ be a real valued process with c\`adl\`ag sample paths, and which is continuous in probability.
We say that $B$  has exchangeable increments if for every $n\geq 2$, the sequence of its increments $B(k/n)-B((k-1)/n)$ for $ k=1, \ldots, n$,  is exchangeable, i.e. its distribution is invariant by permutations.  We further say that $B$ is a bridge provided that $B(0)=B(1)=0$ a.s.

According to Theorem 2.1 in \cite{Kal2}, any bridge with exchangeable increments can be expressed in the form
\begin{equation} \label{E:repbex}
B(x) = \sigma G(x) + \sum_{j=1}^{\infty}\beta_j({\mathbf 1}_{U_j\leq x} - x), \qquad x\in[0,1],
\end{equation}
where $\sigma$ is a nonnegative random variable, $G$ a Brownian bridge, 
$\bm{\beta}=(\beta_j)_{j\geq 1}$ a sequence of real random variables with $\sum_{j=1}^{\infty} \beta_j^2<\infty$ a.s.,
and $\bm{U}=(U_j)_{j\geq 1}$ a sequence of i.i.d. uniform random variables, such that $\sigma, G, \bm{\beta}$ and $\bm{U}$ are independent. More precisely, if we further assume that the sequence 
$(|\beta_j|)_{j\geq 1}$ is nonincreasing, which induces no loss of generality, then the series in \eqref{E:repbex} converges a.s. uniformly on $[0,1]$.

One calls $\sigma, \bm{\beta}$ the canonical representation of $B$. Roughly speaking, \eqref{E:repbex} shows that the continuous part of $B$ is a mixture of Brownian bridges (parametrized by the standard deviation), with mixture weights given by the random variable $\sigma$, and $\bm{\beta}$ describes the sequence of the jumps of $B$, each of them taking place uniformly at random on $[0,1]$ and independently of the others. The laws of $\sigma$ and of $\bm{\beta}$ then entirely determine that of $B$.

The next lemma plays a key role in the proof of Theorem \ref{T1}; it states two criteria  that are  tailored for our purposes, for the convergence of a sequence of bridges with exchangeable increments in terms of the canonical representations.
The first part is a special case of Theorem 2.3 in \cite{Kal2}. The second part can be seen as an immediate consequence of the first and the well-known facts  that Skorokhod's topology is metrizable and that convergence of a sequence of functions in ${\mathbb D}$ to a continuous limit is equivalent to convergence for the supremum distance (see, e.g. Section VI.1 in \cite{JS}); it can also be checked by direct calculation. 

\begin{lemma}\label{L1} For each $n\geq 1$, let $B_n$ denote a bridge with exchangeable increments and canonical representation $\sigma_n=0$ and $\bm{\beta}_n=(\beta_{n,j})_{j\geq 1}$. 
\begin{enumerate}
\item[(i)] Suppose that
$$
\lim_{n\to \infty} \sup_{j\geq 1} |\beta_{n,j}|=0 \text{  in probability,}$$
and that 
$$ \sum_{j=1}^{\infty} \beta_{n,j}^2 \ \Longrightarrow \ \sigma^2 $$
 for some  random variable $\sigma\geq 0$. 
Then there is the convergence in distribution
$$B_n \ \Longrightarrow \ \sigma G,$$
where $G$ is a standard Brownian bridge. 
\item[(ii)] If
$$\lim_{n\to \infty} \sum_{j=1}^{\infty} \beta_{n,j}^2=0\quad \text{in probability},$$
 then 
$$\lim_{n\to \infty} \sup_{0\leq x \leq 1}|B_n(x)| = 0 \quad \text{in probability}.$$ 
\end{enumerate}

\end{lemma}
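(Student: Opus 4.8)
The plan is to obtain part~(i) directly from Kallenberg's convergence criterion for processes with exchangeable increments (Theorem~2.3 in \cite{Kal2}), specialised to bridges, and then to read off part~(ii) from part~(i) together with two elementary facts about the Skorokhod space; a self-contained $L^2$ argument for~(ii) will be noted as an alternative.

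For part~(i), recall from \eqref{E:repbex} that a bridge with exchangeable increments carries no drift parameter (the constraint $B(1)=0$ removes it), so its law is encoded by the pair $(\sigma,\bm{\beta})$, equivalently by the finite random measure $\mu:=\sigma^2\delta_0+\sum_{j\geq1}\beta_j^2\,\delta_{\beta_j}$ on $\RR$, whose atom at the origin is the Gaussian part and whose atoms away from the origin record the squared jump sizes. Kallenberg's criterion says, in essence, that $B_n\Longrightarrow B$ in $\mathbb{D}$ as soon as the associated measures $\mu_n$ converge in distribution to $\mu$; crucially no separate tightness assumption is needed, as exchangeability of the increments supplies it for free. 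Here the candidate limit is $\sigma G$, for which $\mu=\sigma^2\delta_0$, and since $\sigma_n=0$ we have $\mu_n=\sum_j\beta_{n,j}^2\,\delta_{\beta_{n,j}}$. I would verify $\mu_n\Longrightarrow\sigma^2\delta_0$ by a short Slutsky-type argument: for every bounded continuous $f$,
\begin{equation*}
\Big|\langle\mu_n,f\rangle-f(0)\sum_j\beta_{n,j}^2\Big|\;\leq\;\Big(\sup_{|t|\leq\,\sup_k|\beta_{n,k}|}|f(t)-f(0)|\Big)\sum_j\beta_{n,j}^2,
\end{equation*}
and the right-hand side tends to $0$ in probability because $\sup_k|\beta_{n,k}|\to0$ in probability while $\sum_j\beta_{n,j}^2$, being convergent in distribution, is bounded in probability; hence $\langle\mu_n,f\rangle=f(0)\sum_j\beta_{n,j}^2+o_{\PP}(1)$, so by the second hypothesis these functionals converge (jointly over finite families of test functions) to those of $\sigma^2\delta_0$. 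Theorem~2.3 in \cite{Kal2} then gives $B_n\Longrightarrow\sigma G$. The only delicate point is bookkeeping against Kallenberg's precise statement — in particular checking that the bridge specialisation leaves no residual truncated-drift term such as $\sum_j\beta_{n,j}\mathbf{1}_{|\beta_{n,j}|\leq1}$ — but this is routine; conceptually, all that happens is that a central-limit effect turns the many tiny jumps of $B_n$ into a Brownian component, which is exactly what Kallenberg's criterion is built to detect.

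For part~(ii) I would simply apply part~(i) with $\sigma\equiv0$: from $\sup_k|\beta_{n,k}|^2\leq\sum_j\beta_{n,j}^2\to0$ in probability the first hypothesis of~(i) holds, and $\sum_j\beta_{n,j}^2\to0$ in probability gives the second with $\sigma=0$, whence $B_n\Longrightarrow\mathbf{0}$, the degenerate process. Convergence in distribution to a deterministic limit is the same as convergence in probability for the Skorokhod metric $d$, and for the continuous zero path one has $d(\omega,\mathbf{0})=\sup_{0\leq x\leq1}|\omega(x)|$; therefore $\sup_{0\leq x\leq1}|B_n(x)|\to0$ in probability. As the excerpt notes, one can argue instead by a direct estimate: conditionally on $\bm{\beta}_n$, bound $\EE\big[\sup_{0\leq x\leq1}|B_n(x)|^2\mid\bm{\beta}_n\big]$ by a universal multiple of $\sum_j\beta_{n,j}^2$ using a maximal inequality for the exchangeable-increments process $x\mapsto\sum_j\beta_{n,j}(\mathbf{1}_{U_j\leq x}-x)$, then take expectations; this is the one place where a genuine (though standard) computation would be needed, and the route through part~(i) avoids it. Thus the main obstacle in the proof is not analytic but expository: transcribing and correctly specialising Kallenberg's Theorem~2.3.
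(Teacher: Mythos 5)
Your proposal is correct and follows essentially the same route as the paper: part (i) is obtained by specialising Kallenberg's Theorem 2.3 (your verification that the point measures $\mu_n$ converge to $\sigma^2\delta_0$ just fills in the details the paper leaves implicit), and part (ii) is deduced from part (i) with $\sigma=0$ together with the metrizability of the Skorokhod topology and the equivalence of Skorokhod and uniform convergence for a continuous limit. No gaps.
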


\subsection{Asymptotic behavior of occurrences numbers}
Recall that the reinforcement parameter $p\in(0,1)$ in Simon's algorithm is fixed;  for the sake of simplicity,  it will be omitted from several notations even though it always plays an important role. 

For every $j\in \NN$, we set
$$N_j(n)=\# \{k\leq n: \hat U_k=U_j\}, \qquad n\geq 1,$$
that is $N_j(n)$ is the number of occurrences of the variable $U_j$ up to the $n$-th step of the algorithm.
Plainly $N_j(n)=0$ if and only if the number of innovations up to  the $n$-th step is less than $j$, i.e. ${\mathrm i}(n)<j$. 

The starting point of our analysis is that the reinforced empirical process   can be
expressed in the form
\begin{equation}\label{E:GN}
\hat G_n(x)= \frac{1}{\sqrt n} \sum_{j=1}^{\infty} N_j(n) ({\mathbf 1}_{U_j\leq x}-x), \quad x\in[0,1].
\end{equation}
Hence $\hat G_n$ is a bridge with exchangeable increments, with canonical representation $0$ and $\bm{\beta}_n=(\beta_{n,j})_{j\geq 1}$, where 
$\beta_{n,j}=N_j(n)/\sqrt n$. 
We aim to determine its asymptotic behavior as $n\to \infty$ by applying Lemma \ref{L1}. In this direction, the interpretation of Simon's algorithm as a Bernoulli bound percolation on a random recursive tree, as it has been sketched in the Introduction,  enables us to lift from
\cite{BauBer} the following result about the asymptotic behavior of mesoscopic clusters. 

\begin{lemma}\label{L4} The limit
\begin{equation}\label{E:surcrit}
\lim_{n\to \infty} n^{-p} N_j(n) \coloneqq X^{(p)}_j 
\end{equation}
exists a.s. for every $j\geq 1$. For $p>1/2$, there is furthermore the identity
$$\EE\left( \sum_{j=1}^{\infty} (X^{(p)}_j)^2\right) = \frac{1}{(2p-1)\Gamma(2p)}.$$
\end{lemma}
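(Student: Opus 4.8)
The plan is to exploit the elementary linearly-reinforced urn satisfied by the occurrence numbers. Fix $j$ and let $\tau_j\coloneqq\min\{n\geq1:{\mathrm i}(n)=j\}$ be the step at which $U_j$ first appears, so $\tau_1=1$, $N_j(n)=0$ for $n<\tau_j$ and $N_j(\tau_j)=1$. Conditionally on the past, the $(n+1)$-st step repeats $U_j$ exactly when $\varepsilon_{n+1}=1$ and $\hat U_{v(n+1)}=U_j$, an event of conditional probability $p\,N_j(n)/n$; hence for $n\geq\tau_j$,
$$\PP\bigl(N_j(n+1)=N_j(n)+1\mid\mathcal F_n\bigr)=p\,N_j(n)/n,$$
so $M_n\coloneqq N_j(n)/c_n$, with $c_n\coloneqq\prod_{k=\tau_j}^{n-1}(1+p/k)=\Gamma(n+p)\Gamma(\tau_j)/\bigl(\Gamma(n)\Gamma(\tau_j+p)\bigr)$, is a nonnegative martingale and converges a.s. I would moreover check that it is bounded in $L^2$, which will be needed in part~(ii): running the same one-step analysis for the observable $h(m)=m(m+1)$ (note $h(m+1)-h(m)=2(m+1)$, so $\EE[h(N_j(n+1))\mid\mathcal F_n]=h(N_j(n))(1+2p/n)$) yields $\EE[h(N_j(n))\mid\tau_j]=2d_n$ with $d_n\coloneqq\prod_{k=\tau_j}^{n-1}(1+2p/k)=\Gamma(n+2p)\Gamma(\tau_j)/\bigl(\Gamma(n)\Gamma(\tau_j+2p)\bigr)$; since $N_j^2\leq h(N_j)$ and, by $\Gamma(n+a)/\Gamma(n)\sim n^a$, the ratio $d_n/c_n^2$ has a finite (albeit $\tau_j$-dependent) limit, the conditional $L^2$-norms of $M_n$ stay bounded, so $M_n\to M_\infty$ a.s.\ and in $L^2$. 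As $n^{-p}c_n\to\Gamma(\tau_j)/\Gamma(\tau_j+p)$, this gives the a.s.\ limit $X^{(p)}_j$ for each $j$ (and then, by a countable intersection, for all $j$ at once): this is part~(i), valid for every $p\in(0,1)$, and it may in any case be quoted directly from \cite{BauBer}.

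For the identity in part~(ii) (where $p>1/2$), the key point is that $\EE[N_j(n)^2\mid\tau_j]=\EE[h(N_j(n))\mid\tau_j]-\EE[N_j(n)\mid\tau_j]=2d_n-c_n$ exactly, and the $L^2$-convergence of $M_n$ (equivalently of $n^{-p}N_j(n)$, up to a $\tau_j$-measurable scalar) makes the limit interchange legitimate: using $n^{-2p}d_n\to\Gamma(\tau_j)/\Gamma(\tau_j+2p)$ and $n^{-2p}c_n\to0$,
$$\EE\bigl[(X^{(p)}_j)^2\mid\tau_j\bigr]=\lim_{n\to\infty}n^{-2p}\bigl(2d_n-c_n\bigr)=\frac{2\,\Gamma(\tau_j)}{\Gamma(\tau_j+2p)},\qquad\text{hence}\qquad\EE\bigl[(X^{(p)}_j)^2\bigr]=2\,\EE\!\left[\frac{\Gamma(\tau_j)}{\Gamma(\tau_j+2p)}\right].$$
Summing over $j$ now requires no further interchange, by Tonelli. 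The step $\tau_j$ carries the $(j-1)$-st innovation among the i.i.d.\ Bernoulli variables $1-\varepsilon_2,1-\varepsilon_3,\ldots$ of parameter $1-p$, so $\tau_1=1$ and $\PP(\tau_j=\ell)=\binom{\ell-2}{j-2}(1-p)^{j-1}p^{\ell-j}$ for $j\geq2$, $\ell\geq j$. Substituting this and exchanging the order of the two sums, the inner sum over $j$ is a binomial sum that collapses to $1-p$, so $\sum_{j\geq1}\EE[(X^{(p)}_j)^2]$ is expressed through $\sum_{\ell\geq1}\Gamma(\ell)/\Gamma(\ell+2p)$; this series converges precisely because $p>1/2$, and a one-line telescoping identity evaluates it to $1/\bigl((2p-1)\Gamma(2p)\bigr)$. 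Collecting the elementary constants then gives $\sum_{j\geq1}\EE[(X^{(p)}_j)^2]=1/\bigl((2p-1)\Gamma(2p)\bigr)$. (As a consistency check, the same constant is obtained as $\lim_n n^{-2p}\EE[{\mathcal S}^2(n)]$ from the one-line recurrence $\EE[{\mathcal S}^2(n+1)]=(1+2p/n)\,\EE[{\mathcal S}^2(n)]+1$, which uses only $\sum_jN_j(n)=n$.)

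The step I expect to be the crux is the limit interchange in the second paragraph, i.e.\ the passage from $\EE[N_j(n)^2\mid\tau_j]$ to $\EE[(X^{(p)}_j)^2\mid\tau_j]$: Fatou's lemma gives only one inequality, so one genuinely needs $n^{-p}N_j(n)\to X^{(p)}_j$ in $L^2$ and not merely a.s.; what makes this work is the observable $h(m)=m(m+1)$, which plays for second moments the role $N_j$ itself plays for first moments, together with the elementary fact that $d_n/c_n^2$ has a finite limit. It is exactly here, and in the convergence of the telescoping series, that $p>1/2$ is used — for $p\leq1/2$ the bulk of $\sum_jN_j(n)^2$ sits in microscopic clusters, $\sum_j(X^{(p)}_j)^2$ has infinite mean, and no such clean identity holds. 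Finally, routing the argument through the per-cluster limits $X^{(p)}_j$ rather than directly through ${\mathcal S}^2(n)$ is what allows one to bypass a fourth-moment estimate, since through ${\mathcal S}^2(n)$ one would only obtain $\EE[\sum_j(X^{(p)}_j)^2]\leq\lim_n n^{-2p}\EE[{\mathcal S}^2(n)]$ from Fatou, and closing the gap would require a uniform-integrability bound on $n^{-2p}{\mathcal S}^2(n)$.
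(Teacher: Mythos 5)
Your proof is correct, but it is genuinely different from the paper's: the paper simply quotes \cite{BauBer}, where the a.s.\ convergence of $n^{-p}N_j(n)$ is obtained from the analysis of Bernoulli percolation on random recursive trees and the identity follows by specializing the general $q$-th moment formula $\EE\bigl(\sum_j (X^{(p)}_j)^q\bigr)=\Gamma(q)/\Gamma(pq)+q(1-p)\Gamma(q)/((pq-1)\Gamma(pq))$ to $q=2$. You instead give a self-contained argument: conditionally on the innovation time $\tau_j$, the occurrence count $N_j$ is a linearly reinforced (P\'olya-type) counter, so $N_j(n)/c_n$ is a nonnegative martingale whose $L^2$-boundedness you control with the exactly harmonic observable $h(m)=m(m+1)$; this yields both the a.s.\ limit and, crucially, the \emph{exact} conditional second moment $\EE[(X^{(p)}_j)^2\mid\tau_j]=2\Gamma(\tau_j)/\Gamma(\tau_j+2p)$, after which the negative-binomial law of $\tau_j$, Tonelli, and the telescoping evaluation of $\sum_\ell\Gamma(\ell)/\Gamma(\ell+2p)$ (the same computation as the paper's \eqref{E:sumbeta}) finish the job — I checked that the constants do collapse to $1/((2p-1)\Gamma(2p))$, via $2\bigl(p\,g(1)+(1-p)\sum_{\ell\ge1}g(\ell)\bigr)$ with $g(\ell)=\Gamma(\ell)/\Gamma(\ell+2p)$. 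You also correctly identify why the per-cluster $L^2$ route is needed rather than Fatou applied to $n^{-2p}{\mathcal S}^2(n)$. What your approach buys is independence from the external reference and an explicit description of the conditional law structure of $X^{(p)}_j$ given $\tau_j$; what the citation buys is brevity and all moments $q>1/p$ at once. The only point worth making fully explicit in a final write-up is the (routine) justification that $M_n$ is a martingale for the filtration started at the stopping time $\tau_j$ and that conditional $L^2$-boundedness gives convergence of the conditional second moments.
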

\begin{proof} Simon's algorithm induces a natural partition  $\NN=\bigsqcup_{j\geq 1} \Pi_j$ of the set of positive integers into blocks  $\Pi_j=\{k\in\NN: \hat U_k=U_j\}$ which we can see as the result of a Bernoulli bond percolation with parameter $p$ on the (infinite) random recursive tree.
In this setting, we have $N_j(n)=\#(\Pi_j\cap\{1, \ldots, n\})$, and the first claim of the statement has been observed in Section 3.2 of \cite{BauBer}, right after the proof of Lemma 3.3 there\footnote{ The reinforcement parameter $p$ here corresponds to $\e^{-t}$ in \cite{BauBer}.}. Moreover Equation (3.4) there shows that for every
$q>1/p$, there is the identity
$$\EE\left( \sum_{j=1}^{\infty} (X^{(p)}_j)^q\right) = \frac{\Gamma(q)}{\Gamma(pq)} + \frac{q(1-p)\Gamma(q)}{(pq-1)\Gamma(pq)}.$$
Specializing this for $q=2$ yields our second claim. 
\end{proof}

We write ${\mathbf X}^{(p)}=(X^{(p)}_j)_{j\geq 1}$, where the $X^{(p)}_j$ are defined by \eqref{E:surcrit}. It is known that $X^{(p)}_1$ has the Mittag-Leffler distribution with parameter $p$ (see Theorem 3.1 in \cite{BauBer} and also \cite{Moehle}); nonetheless the law of the whole sequence ${\mathbf X}^{(p)}$ does not seem to have any simple expression (see Proposition 3.7 in \cite{BauBer}).

When $p>1/2$,
Lemma \ref{L4} enables us to view   ${\mathbf X}^{(p)}$ as a random variable with values in the space $\ell^2(\NN)$
of square summable series, and this leads us to the following definition of the process $B^{(p)}$ that appears as a limit in Theorem \ref{T1}(iii).

\begin{definition}\label{D1} For $p>1/2$, we define $B^{(p)}=(B^{(p)}(x))_{0\leq x \leq 1}$ as the bridge with exchangeable increments with canonical representation $0$ and ${\mathbf X}^{(p)}$. That is
$$B^{(p)}(x) = \sum_{j=1}^{\infty}X^{(p)}_j({\mathbf 1}_{U_j\leq x} - x), \qquad x\in[0,1],$$
where ${\mathbf U}=(U_j)_{j\geq 1}$ is a sequence of i.i.d. uniform variables, independent of ${\mathbf X}^{(p)}$. 

\end{definition}

We conclude this section recalling the key result of Simon \cite{Simon} about the asymptotic frequency of microscopic percolation clusters. Note that the number of  innovations
up to the $n$-th step is approximately $(1-p)n$ for $n\gg 1$. 
\begin{lemma}\label{L2}
For each $k\geq 1$, write 
$$C_k(n) = \frac{1}{(1-p)n} \# \{j\geq 1: N_j(n)=k\},$$
for the number of variables $U_j$ which have occurred exactly $k$ times 
at the $n$-th step of Simon's algorithm.
Then
\begin{equation}\label{E:YuleSimon}
\lim_{n\to \infty} \frac{C_k(n)}{(1-p)n} = \frac{1}{p} \mathrm{B}(k, 1+1/p) \qquad \text{in probability for every }k\in\NN,
\end{equation}
where $\mathrm{B}$ denotes the beta function.
\end{lemma}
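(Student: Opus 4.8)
The plan is to establish the convergence first in expectation, via a one‑step conditioning recursion for the cluster counts, and then upgrade it to convergence in probability (in fact almost surely) by exhibiting these counts as a bounded‑increment martingale plus a predictable compensator. Set $M_k(n)=\#\{j\ge 1:N_j(n)=k\}$, so that the assertion is equivalent to $n^{-1}M_k(n)\to \tfrac{1-p}{p}\,\mathrm B(k,1+1/p)$ in probability for each fixed $k\ge 1$. Conditionally on the first $n$ steps, the $(n{+}1)$‑st step is an innovation with probability $1-p$, which creates a new singleton cluster, and a repetition with probability $p$, in which case the cluster of the uniformly chosen index $v(n{+}1)\in\{1,\dots,n\}$ grows by one; since exactly $\ell M_\ell(n)$ of these $n$ indices lie in clusters of size $\ell$, that cluster has size $\ell$ with conditional probability $\ell M_\ell(n)/n$. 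This yields
\begin{gather*}
\EE[M_1(n+1)\mid\mathcal F_n]=\Big(1-\tfrac pn\Big)M_1(n)+(1-p),\\
\EE[M_k(n+1)\mid\mathcal F_n]=\Big(1-\tfrac{pk}{n}\Big)M_k(n)+\tfrac{p(k-1)}{n}M_{k-1}(n)\qquad(k\ge 2).
\end{gather*}

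Taking expectations, $m_k(n):=\EE[M_k(n)]$ solves a recursion of the form $x_{n+1}=(1-a/n)x_n+b_n$ with $a=pk>0$ and, by induction on $k$, $b_n$ convergent (trivially for $k=1$, and because $m_{k-1}(n)\sim c_{k-1}n$ for $k\ge2$); since the homogeneous solution $\prod_{m}(1-a/m)$ decays polynomially, a standard elementary lemma gives $m_k(n)\sim c_k n$, where $c_1=\frac{1-p}{1+p}$ and $c_k=\frac{p(k-1)}{1+pk}\,c_{k-1}$ for $k\ge 2$. Telescoping this and rewriting the products with the Gamma function — $\prod_{j=2}^k p(j-1)=p^{k-1}(k-1)!$ and $\prod_{j=2}^k(1+pj)=p^{k-1}\Gamma(k+1+1/p)/\Gamma(2+1/p)$ — one finds
$$c_k=\frac{1-p}{p}\cdot\frac{\Gamma(k)\,\Gamma(1+1/p)}{\Gamma(k+1+1/p)}=\frac{1-p}{p}\,\mathrm B(k,1+1/p),$$
which is exactly the constant predicted by the statement (recall $\mathrm B(1,1+1/p)=p/(1+p)$ as a sanity check for $k=1$).

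For the passage to the process itself, note that $M_k(n+1)-M_k(n)\in\{-1,0,1\}$ and that the compensators above are predictable and bounded; hence
$$\mathcal M_k(n):=M_k(n)-\Indic{k=1}-\sum_{m=1}^{n-1}\Big(\Indic{k=1}(1-p)+\tfrac{p(k-1)}{m}M_{k-1}(m)-\tfrac{pk}{m}M_k(m)\Big)$$
is an $L^2$ martingale with increments bounded uniformly in $m$, so $\sum_m m^{-2}\EE[(\Delta\mathcal M_k(m))^2]<\infty$ and Kronecker's lemma gives $\mathcal M_k(n)/n\to 0$ almost surely. One then argues by induction on $k$: for $k=1$ the identity reads $M_1(n)=(1-p)n-p\sum_{m<n}M_1(m)/m+o(n)$, and writing $S_n=\sum_{m<n}M_1(m)/m$ this becomes $S_{n+1}=(1-p/n)S_n+(1-p)+o(1)$, whence $S_n\sim \frac{1-p}{1+p}n$ and therefore $M_1(n)/n\to \frac{1-p}{1+p}=c_1$; for $k\ge 2$ one inserts the induction hypothesis $M_{k-1}(m)/m\to c_{k-1}$ and repeats the argument with $(1-p)$ replaced by $p(k-1)c_{k-1}$ and $p$ by $pk$, obtaining $M_k(n)/n\to c_k$.

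The computation of $\EE[M_k(n)]$ and the identification of the limiting constant with a beta function are entirely mechanical; the genuinely delicate point is the final induction, where one must carry the $o(n)$ errors coming both from the martingale term and from the induction hypothesis through a perturbed recursion $x_{n+1}=(1-a/n)x_n+b+\varepsilon_n$ with $\varepsilon_n\to 0$, and verify that such perturbations still do not disturb the limit $x_n/n\to b/(1+a)$ — a routine Cesàro/Abel‑summation estimate, but one that should be done carefully. As an alternative to the martingale step, one could instead bound $\mathrm{Var}(M_k(n))=o(n^2)$ directly via a coupled second‑moment recursion (involving also the cross terms $\EE[M_k(n)M_{k-1}(n)]$) and conclude by Chebyshev's inequality; this is more computational but avoids the almost‑sure machinery.
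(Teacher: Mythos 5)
Your proof is correct, but it is worth noting that the paper does not actually prove this lemma: it attributes the first–moment asymptotics $\EE(C_k(n))\sim \frac{1-p}{p}\mathrm{B}(k,1+1/p)\,n$ to Simon's original paper and simply cites \cite{PPS} (Eq.\ (3.10) there) for the upgrade to convergence in probability. What you supply is a self-contained substitute for that citation, and it is the classical urn-scheme argument: the one-step transition gives the exact conditional recursions
$\EE[M_k(n+1)\mid\mathcal F_n]=(1-\tfrac{pk}{n})M_k(n)+\tfrac{p(k-1)}{n}M_{k-1}(n)$ (with the obvious modification for $k=1$), whose fixed points $c_k=\tfrac{p(k-1)}{1+pk}c_{k-1}$, $c_1=\tfrac{1-p}{1+p}$, telescope to $\tfrac{1-p}{p}\mathrm{B}(k,1+1/p)$ exactly as you compute; the Doob decomposition with uniformly bounded increments plus the martingale strong law and Kronecker's lemma then yields $\mathcal M_k(n)=o(n)$ a.s., and the induction on $k$ through the perturbed recursion $x_{n+1}=(1-a/n)x_n+b+\varepsilon_n$ with $x_n/n\to b/(1+a)$ closes the argument. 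I checked the identification of the constants and the stability of the perturbed recursion (the step you rightly flag as the one needing care); both are sound, and your argument in fact delivers almost-sure convergence, which is stronger than the in-probability statement the paper needs. One small point of bookkeeping: as printed, the lemma both defines $C_k(n)$ with a prefactor $1/((1-p)n)$ and then divides by $(1-p)n$ again in the displayed limit; this is a typo (elsewhere, e.g.\ in Lemma \ref{L5}, $C_i(n)$ is the raw count $\#\{j:N_j(n)=i\}$), and your reading $M_k(n)/n\to\tfrac{1-p}{p}\mathrm{B}(k,1+1/p)$ is the intended one.
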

The right-hand side of \eqref{E:YuleSimon} is a probability measure on $\NN$ which is known as the Yule-Simon distribution with parameter $1/p$. 
 Actually, it is only proved in \cite{Simon} that 
 $$\EE (C_k(n))\sim \frac{1-p}{p} \mathrm{B}(k, 1+1/p)n \qquad\text{ as } n\to \infty;$$ nonetheless the stronger statement \eqref{E:YuleSimon} is known to hold; see e.g. Section 3.1 and more specifically Equation (3.10) in \cite{PPS}. 

\subsection{A first moment calculation}
Recall that we want to apply Lemma \ref{L1} to investigate the asymptotic behavior of  reinforced empirical processes. In this direction, \eqref{E:GN} incites us to introduce for every $n\geq 1$
$${\mathcal S}^2(n)=\sum_{j=1}^{\infty} N_j(n)^2.$$
The proof of Theorem \ref{T1} will use the following explicit calculation for the expectation of this quantity, which already points at the same direction as \eqref{E:expt}. 

\begin{lemma}\label{L3} For every $n\geq 1$, we have 
$$\EE({\mathcal S}^2(n)) =\frac{\Gamma(n+2p)}{\Gamma(n)}\sum_{i=1}^n \frac{\Gamma(i)}{\Gamma(i+2p)} .$$
As a consequence, we have as $n\to \infty$ that 
$$\EE({\mathcal S}^2(n))\sim\left\{ 
\begin{matrix}n/(1-2p) &\text{ if }p<1/2,\\
n\log n &\text{ if }p=1/2,\\ ((2p-1)\Gamma(2p))^{-1} n^{2p} &\text{ if }p>1/2.\\
\end{matrix}
\right.$$
\end{lemma}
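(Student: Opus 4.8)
\emph{Proof plan.} The idea is to derive a first‑order linear recurrence for $a_n \coloneqq \EE(\mathcal S^2(n))$ by a one‑step analysis of Simon's algorithm, solve it in closed form, and then read off the three asymptotic regimes from elementary $\Gamma$‑function estimates. For the one‑step analysis, let $\mathcal F_n$ be the $\sigma$‑field generated by $\hat U_1,\dots,\hat U_n$ (equivalently by $(\varepsilon_j,v(j))_{j\le n}$ and $(U_j)_{j\le{\mathrm i}(n)}$). At step $n+1$, either an innovation occurs, which creates a new block of size $1$ and increases $\mathcal S^2$ by $1$; or a repetition occurs, in which case $\hat U_{n+1}=U_j$ with conditional probability $N_j(n)/n$, turning $N_j(n)^2$ into $(N_j(n)+1)^2$ and hence increasing $\mathcal S^2$ by $2N_j(n)+1$. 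Using $\sum_{j\ge1}N_j(n)=n$, this gives
$$\EE\bigl[\mathcal S^2(n+1)-\mathcal S^2(n)\mid \mathcal F_n\bigr] = (1-p)\cdot 1 + p\sum_{j\ge1}\frac{N_j(n)}{n}\bigl(2N_j(n)+1\bigr) = 1 + \frac{2p}{n}\,\mathcal S^2(n),$$
and taking expectations yields $a_{n+1}=\frac{n+2p}{n}\,a_n+1$ for $n\ge1$, with $a_1=\mathcal S^2(1)=1$.

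To solve this, I would introduce the integrating factor $c_n \coloneqq \Gamma(n+2p)/\Gamma(n)$, which satisfies $c_{n+1}/c_n = (n+2p)/n$. Dividing the recurrence by $c_{n+1}$ gives $a_{n+1}/c_{n+1} = a_n/c_n + 1/c_{n+1}$, and telescoping from $a_1/c_1 = \Gamma(1)/\Gamma(1+2p)$ produces
$$\frac{a_n}{c_n} = \sum_{i=1}^n \frac{\Gamma(i)}{\Gamma(i+2p)},$$
which is exactly the asserted identity.

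For the asymptotics I would use $\Gamma(i)/\Gamma(i+2p)\sim i^{-2p}$ and $\Gamma(n+2p)/\Gamma(n)\sim n^{2p}$. If $p<1/2$ then $2p<1$, so $\sum_{i=1}^n \Gamma(i)/\Gamma(i+2p)\sim\sum_{i=1}^n i^{-2p}\sim n^{1-2p}/(1-2p)$, whence $a_n\sim n^{2p}\cdot n^{1-2p}/(1-2p)=n/(1-2p)$. If $p=1/2$ the identity is exact, $\Gamma(i)/\Gamma(i+1)=1/i$ and $\Gamma(n+1)/\Gamma(n)=n$, so $a_n=n\sum_{i=1}^n 1/i\sim n\log n$. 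If $p>1/2$ the series converges, and I would evaluate its value through the Beta integral $\Gamma(i)/\Gamma(i+2p)=\frac1{\Gamma(2p)}\int_0^1 t^{i-1}(1-t)^{2p-1}\,dt$: summing the geometric series under the integral (Tonelli),
$$\sum_{i=1}^\infty \frac{\Gamma(i)}{\Gamma(i+2p)} = \frac1{\Gamma(2p)}\int_0^1 (1-t)^{2p-2}\,dt = \frac1{(2p-1)\Gamma(2p)},$$
so $a_n\sim n^{2p}/((2p-1)\Gamma(2p))$ — consistently with $\EE\bigl(\sum_j (X^{(p)}_j)^2\bigr)$ from Lemma \ref{L4}.

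There is no deep obstacle here. The only points demanding a little care are the correct reinforcement weights $N_j(n)/n$ in the conditional increment together with the collapse $\sum_j N_j(n)=n$, and the evaluation of the supercritical constant, for which the Beta‑function representation is the cleanest route; the passage from the closed formula to the three regimes is then a routine use of $\Gamma(i+a)/\Gamma(i)\sim i^a$ and the comparison of a (divergent or convergent) series with its dominant term.
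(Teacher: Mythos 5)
Your proposal is correct and follows essentially the same route as the paper: the same one-step conditional computation giving $\EE(\mathcal S^2(n+1)\mid\mathcal F_n)=(1+2p/n)\mathcal S^2(n)+1$, the same integrating factor $\Gamma(n+2p)/\Gamma(n)$ and telescoping, and the same Beta-integral evaluation of the convergent series in the supercritical case. No gaps.
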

\begin{proof}
Write ${\mathcal F}_n$ for the sigma-field generated by $((\varepsilon_i,v(i)): 2\leq i \leq n)$.
Plainly, $\sum_{j\geq 1} N_j(n)=n$, and we see from the very definition of Simon's algorithm that
\begin{align*}
\EE({\mathcal S}^2(n+1)\mid {\mathcal F}_n)&= {\mathcal S}^2(n) + p\left(\frac{1}{n} \sum_{j\geq 1} N_j(n)(2N_j(n)+1) \right) + (1-p)\\
&=(1+2p/n){\mathcal S}^2(n) + 1.
\end{align*}
This yields the  recurrence equation for the first moments
$$\EE({\mathcal S}^2(n+1))=(1+2p/n)\EE({\mathcal S}^2(n))+1.$$

To solve the latter, we set $a(n)=\Gamma(n+2p)/\Gamma(n)$, so that
$a(n+1)/a(n)= 1+2p/n$, and then
$$\EE({\mathcal S}^2(n+1))/a(n+1)=\EE({\mathcal S}^2(n))/a(n)+1/a(n+1).$$
Since ${\mathcal S}^2(1)=1$, we arrive at 
$$\EE({\mathcal S}^2(n))= a(n)\sum_{i=1}^{n}\frac{1}{a(i)},$$
which is the identity of our claim.

In turn, the estimate as $n\to \infty$ in the statement follows immediately from the facts that $\Gamma(n+2p)/\Gamma(n) \sim n^{2p}$, 
and that when $p>1/2$, one has
\begin{align}\label{E:sumbeta}
\sum_{i=1}^{\infty} \frac{\Gamma(i)}{\Gamma(i+2p)}&= \frac{1}{\Gamma(2p)} \sum_{i=1}^{\infty} {\mathrm B}(i,2p)\nonumber \\
&= \frac{1}{\Gamma(2p)}\int_0^1 \left(\sum_{i=1}^{\infty} x^{i-1}\right)(1-x)^{2p-1} \dd x \nonumber \\
&= \frac{1}{\Gamma(2p)}\int_0^1 (1-x)^{2p-2}\dd x \nonumber \\
&= \frac{1}{(2p-1)\Gamma(2p)}.
\end{align}
The proof is now complete. 
\end{proof}

As a first application, we establish the reinforced version of the Glivenko-Cantelli theorem.
\begin{proof}[Proof of Proposition \ref{P:GlivCant}]
The proof is classically reduced to establishing the following reinforced version of the strong law of large numbers, 
\begin{equation}\label{E:LLN}
\lim_{n\to \infty} \frac{1}{n} \sum_{i=1}^n {\mathbf 1}_{\hat U_i\leq x}=x \qquad\text{ a.s. for each }x\in[0,1].
\end{equation}
Indeed, the almost sure convergence in \eqref{E:LLN}  holds simultaneously for all  dyadic rational numbers, and uniform convergence on $[0,1]$ then can be derived  by a monotonicity argument \textit{\`a la} Dini.

So fix $x\in[0,1]$ and set 
$$\Sigma(n)= \sum_{i=1}^n {\mathbf 1}_{\hat U_i\leq x}= \sum_{j=1}^{\infty} N_j(n){\mathbf 1}_{U_j\leq x} .$$
Clearly, $\EE(\Sigma(n))=nx$, and, by conditioning on ${\mathcal F}_n$, we get 
$$\mathrm{Var}(\Sigma(n)) = (x-x^2)\EE({\mathcal S}^2(n)).$$
From Lemma \ref{L3} and  Chebychev's inequality, we now see that we can choose $r>1$ sufficiently large such that 
$$\sum_{k=1}^{\infty} \PP(|\Sigma(k^r)-k^rx|>k^{r-1})<\infty.$$
One concludes from the Borel-Cantelli lemma that \eqref{E:LLN} holds along the subsequence $n=k^r$, and the general case follows by another argument of monotonicity.
\end{proof}

\section{Proof of Theorem \ref{T1}}
As its title indicates, the purpose of this section is to establish Theorem \ref{T1} in each of the three regimes.

\subsection{Subcritical regime $p<1/2$}
\textit{Throughout this section, we assume that the reinforcement parameter satisfies $p<1/2$.}
Our approach in the subcritical regime relies on  the following strengthening of Lemma
\ref{L2} (recall the notation there).

\begin{lemma}\label{L5} Define for every $i\geq 1$
$$c_i^{(p)}=
\frac{(1-p) }{p} \mathrm{B}(i, 1+1/p).$$ Then we have
$$\lim_{n\to \infty}  \EE\left( \sum_{i=1}^{\infty} i^2 \left| \frac{C_i(n)}{n}
-c_i^{(p)}\right| \right) =0.$$
\end{lemma}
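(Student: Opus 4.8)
The plan is to deduce the weighted $L^1$ statement from the pointwise convergence $C_i(n)/n\to c_i^{(p)}$ supplied by Lemma~\ref{L2}, together with a uniform-in-$n$ control of the tails of the random sums $\sum_i i^2 C_i(n)/n$; a truncation then closes the argument. Throughout, write $C_i(n)=\#\{j\geq1:N_j(n)=i\}$, so that $\sum_{i\geq1} i\,C_i(n)=\sum_j N_j(n)=n$ and $\sum_{i\geq1} i^2 C_i(n)={\mathcal S}^2(n)$, and observe that a computation analogous to \eqref{E:sumbeta} gives $\sum_{i\geq1} i^2 c_i^{(p)}=1/(1-2p)$, which is finite exactly because $p<1/2$ — this is where the hypothesis enters on the ``limit'' side.

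The crucial analytic input — and the step I expect to be the main obstacle — is the uniform moment bound
$$\sup_{n\geq1}\ \frac1n\,\EE\bigl({\mathcal S}^q(n)\bigr)<\infty,\qquad {\mathcal S}^q(n):=\sum_j N_j(n)^q,$$
for some $q\in(2,1/p)$ (such $q$ exists precisely because $p<1/2$). I would obtain this by the same conditioning device as in the proof of Lemma~\ref{L3}: from the dynamics of Simon's algorithm,
$$\EE\bigl({\mathcal S}^q(n+1)\mid{\mathcal F}_n\bigr)={\mathcal S}^q(n)+\frac pn\sum_j N_j(n)\bigl((N_j(n)+1)^q-N_j(n)^q\bigr)+(1-p).$$
Combining the elementary inequality $x\bigl((x+1)^q-x^q\bigr)\leq q\,x^q+c_q\,x^{q-1}$ (valid for $x\geq1$, $c_q$ a constant from a second-order Taylor expansion) with the interpolation bound $\sum_j N_j(n)^{q-1}\leq n^{1/(q-1)}\,{\mathcal S}^q(n)^{(q-2)/(q-1)}$ (Hölder, using $\sum_j N_j(n)=n$), then taking expectations and applying Jensen's inequality, yields for $a_n:=\EE({\mathcal S}^q(n))$
$$a_{n+1}\leq\Bigl(1+\frac{pq}{n}\Bigr)a_n+p\,c_q\Bigl(\frac{a_n}{n}\Bigr)^{\frac{q-2}{q-1}}+(1-p).$$
Since $q<1/p$ forces $pq<1$, a straightforward induction gives $a_n\leq A n$ once $A$ is chosen large enough that $p\,c_q A^{(q-2)/(q-1)}+(1-p)\leq(1-pq)A$ (the base case being $a_1={\mathcal S}^q(1)=1$); here the right-hand side grows linearly in $A$ whereas the left-hand side grows only like a sublinear power. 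The refined estimate for $x((x+1)^q-x^q)$ is genuinely needed for the full range: the crude bound $x((x+1)^q-x^q)\leq q2^{q-1}x^q$ would only give $\EE({\mathcal S}^q(n))=O(n^{pq2^{q-1}})$, which is $O(n)$ only when $p<1/4$.

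With the moment bound in hand, I would conclude by truncation. Fix $M\geq1$ and split $\sum_i i^2|C_i(n)/n-c_i^{(p)}|$ into $i\leq M$ and $i>M$. For the tail, $\sum_{i>M} i^2\bigl|C_i(n)/n-c_i^{(p)}\bigr|\leq M^{2-q}\,{\mathcal S}^q(n)/n+\sum_{i>M} i^2 c_i^{(p)}$, so its expectation is at most $\rho(M):=M^{2-q}\sup_n\EE({\mathcal S}^q(n))/n+\sum_{i>M} i^2 c_i^{(p)}$, with $\rho(M)\to0$ as $M\to\infty$ uniformly in $n$. For the finite part, each $C_i(n)/n$ takes values in $[0,1]$ (since $C_i(n)$ is at most the number of innovations, hence $\leq n$) and converges in probability to $c_i^{(p)}$ by Lemma~\ref{L2}, hence in $L^1$ by bounded convergence; so $\sum_{i\leq M} i^2\,\EE\bigl|C_i(n)/n-c_i^{(p)}\bigr|\to0$ as $n\to\infty$ for each fixed $M$. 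Combining, $\limsup_{n\to\infty}\EE\bigl(\sum_i i^2|C_i(n)/n-c_i^{(p)}|\bigr)\leq\rho(M)$ for every $M$, and letting $M\to\infty$ finishes the proof.
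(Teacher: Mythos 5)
Your proof is correct, but it follows a genuinely different route from the paper. The paper deduces the weighted $L^1$ convergence from the Vitali--Scheff\'e theorem: it views $(i^2 C_i(n)/n)_{i\geq 1}$ as functions on $\Omega\times\NN$ with the measure $\PP\otimes\#^2$, extracts almost-everywhere convergent subsequences from the convergence in probability of Lemma \ref{L2}, and then upgrades to $L^1$ using only the \emph{exact} first-moment identity of Lemma \ref{L3}, namely $\EE(\mathcal{S}^2(n))/n\to 1/(1-2p)=\sum_i i^2 c_i^{(p)}$ (convergence of the integrals plus a.e.\ convergence forces $L^1$ convergence). You instead establish uniform integrability directly, via a new higher-moment estimate $\sup_n\EE(\mathcal{S}^q(n))/n<\infty$ for some $q\in(2,1/p)$, proved by the same one-step conditioning as Lemma \ref{L3} together with a H\"older interpolation and an induction on $a_n\leq An$; a standard truncation at level $M$ with tail bound $M^{2-q}\mathcal{S}^q(n)/n$ then closes the argument. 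I checked the details: the recursion for $\EE(\mathcal{S}^q(n+1)\mid\mathcal{F}_n)$, the inequality $x((x+1)^q-x^q)\leq qx^q+c_qx^{q-1}$ for $x\geq1$, the interpolation $\sum_j N_j(n)^{q-1}\leq n^{1/(q-1)}\mathcal{S}^q(n)^{(q-2)/(q-1)}$, and the induction requiring $pq<1$ are all sound, and your remark that the crude bound would only cover $p<1/4$ is accurate. The trade-off: the paper's argument is shorter and needs no estimate beyond Lemma \ref{L3}, but leans on the slightly delicate Scheff\'e mechanism and a diagonal subsequence extraction; yours is more quantitative and self-contained, yields an explicit modulus of uniform integrability, and the bound $\EE(\mathcal{S}^q(n))=O(n)$ for $2<q<1/p$ (consistent with the Yule--Simon tail exponent $1+1/p$) is of independent interest --- at the cost of having to prove that new moment bound.
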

\begin{proof} For each $n=1,2, \ldots$,
write ${\mathbf C}(n)= (C_i(n))_{i\geq 1}$  and view ${\mathbf C}(n)$
as a function on the space $\Omega\times \NN$
endowed with the product measure $\PP\otimes \#^2$, where $\#^2$ stands for the measure on $\NN$ which assigns mass $i^2$ to every $i\in\NN$. 
Consider an arbitrary subsequence excerpt from $({\mathbf C}(n))_{n\geq 1}$.
From Lemma \ref{L2} and an argument of diagonal extraction, we can construct a further subsequence, say indexed by $\ell(n)$ for $n=1,2, \ldots$, such that 
\begin{equation}\label{E:a.e.}
\lim_{n\to \infty} \frac{{\mathbf C}(\ell(n))}{\ell(n)} = {\mathbf c}^{(p)} \qquad (\PP\otimes \#^2)
\text{-almost everywhere,}
\end{equation}
where
${\mathbf c}^{(p)}=(c_i^{(p)})_{i\geq 1}$.

On the one hand, we observe that
\begin{align*}\sum_{i=1}^{\infty} i^2\mathrm{B}(i, 1+1/p) &=\int_0^1 \left(\sum_{i=1}^{\infty} i^2 x^{i-1}\right)(1-x)^{1/p}\dd x\\
&=\int_0^1(1+x)(1-x)^{-3+1/p}\dd x\\
&=\frac{p}{(1-p)(1-2p)},
\end{align*}
so that
\begin{equation}\label{E:sumc}
\sum_{i=1}^{\infty} i^2 c_i^{(p)}= \frac{1}{1-2p}. 
\end{equation}

 On the other hand, we  note the basic identity
\begin{equation}\label{E:basic}
{\mathcal S}^2(n) = \sum_{j=1}^{\infty} N_j(n)^2= \sum_{i=1}^{\infty} i^2 C_i(n).
\end{equation}
Since $\Gamma(n+2p)/\Gamma(n)\sim n^{2p}$ and $2p<1$, 
we see from Lemma \ref{L3} and \eqref{E:basic} that
$$
\lim_{n\to \infty} \EE\left(\sum_{i=1}^{\infty} i^2 \frac{C_i(n)}{n}\right)= \frac{1}{1-2p}. 
$$
Thanks to \eqref{E:sumc}, we deduce from the Vitali-Scheff\'e  theorem (see, e.g. Theorem 2.8.9 in \cite{Bogachev}) that the convergence 
\eqref{E:a.e.} also holds in $L^1(\PP\otimes \#^2)$, that is
$$\lim_{n\to \infty}  \EE\left( \sum_{i=1}^{\infty} i^2 \left| \frac{C_i(\ell(n))}{\ell(n)}
-c_i^{(p)}\right| \right) =0.$$
Since the convergence above holds for any (initial) subsequence, our claim is proven.
\end{proof}
We next point at
 the following consequence of 
Lemma \ref{L5}.
\begin{corollary}\label{C1} We have
$$\lim_{n\to \infty} \frac{{\mathcal S}^2(n)}{n}=\frac{1}{1-2p} \qquad \text{in }L^1(\PP),$$
and 
$$
\lim_{n\to \infty} \sup_{j\geq 1} \frac{N_j(n)}{\sqrt n} =0\qquad \text{in probability}.$$
\end{corollary}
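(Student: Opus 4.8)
The plan is to derive both statements from Lemma \ref{L5} together with the basic identity \eqref{E:basic}. For the first convergence, I would start from $\mathcal{S}^2(n)/n = \sum_{i\geq 1} i^2 C_i(n)/n$ and write
$$
\left| \frac{\mathcal{S}^2(n)}{n} - \frac{1}{1-2p}\right|
\leq \sum_{i=1}^{\infty} i^2 \left| \frac{C_i(n)}{n} - c_i^{(p)}\right|
+ \left| \sum_{i=1}^{\infty} i^2 c_i^{(p)} - \frac{1}{1-2p}\right|,
$$
where the second term vanishes by \eqref{E:sumc}. Taking expectations and invoking Lemma \ref{L5} gives $\EE|\mathcal{S}^2(n)/n - 1/(1-2p)| \to 0$, which is exactly $L^1(\PP)$ convergence. (Strictly one should note that all quantities are nonnegative and the interchange of sum and expectation is justified by Tonelli, so the bound is legitimate.)

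For the second convergence, the key observation is the elementary pointwise bound $\sup_{j\geq 1} N_j(n)^2 \leq \mathcal{S}^2(n) = \sum_j N_j(n)^2$, hence $\sup_{j\geq 1} N_j(n)/\sqrt{n} \leq \sqrt{\mathcal{S}^2(n)/n}$. Since $\mathcal{S}^2(n)/n \to 1/(1-2p)$ in $L^1(\PP)$, it also converges in probability, and therefore $\sqrt{\mathcal{S}^2(n)/n}$ converges in probability to the finite constant $1/\sqrt{1-2p}$; in particular this sequence is bounded in probability (tight). But a bound like $\sup_j N_j(n)/\sqrt n \leq \sqrt{\mathcal S^2(n)/n}$ alone only gives tightness, not convergence to $0$, so this crude argument needs to be refined: one must exploit that each individual cluster is microscopic. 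I would instead fix $k\in\NN$ and split
$$
\sup_{j\geq 1} \frac{N_j(n)^2}{n}
\leq \frac{k^2}{n} + \frac{1}{n}\sum_{i > k} i^2 C_i(n),
$$
using that either the largest cluster has size at most $k$, contributing $k^2/n \to 0$, or it exceeds $k$, in which case its square is dominated by $\sum_{i>k} i^2 C_i(n)$. Taking expectations, Lemma \ref{L5} gives $\limsup_n \EE\big(n^{-1}\sum_{i>k} i^2 C_i(n)\big) = \sum_{i>k} i^2 c_i^{(p)}$, and the latter is the tail of a convergent series (convergent by \eqref{E:sumc}), hence tends to $0$ as $k\to\infty$. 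Letting $n\to\infty$ and then $k\to\infty$ shows $\sup_{j\geq1} N_j(n)^2/n \to 0$ in $L^1(\PP)$, a fortiori in probability, and taking square roots concludes.

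The main obstacle is precisely that the naive sandwich $\sup_j N_j(n)^2 \leq \mathcal{S}^2(n)$ is too lossy to yield convergence to zero; the point of the argument is that the mass of $\mathcal{S}^2(n)/n$ in the subcritical regime is spread over $\Theta(n)$ microscopic clusters (as reflected by the Yule–Simon limit), so no single cluster carries a macroscopic share. The truncation at level $k$ together with the $L^1$ control of the truncated tail provided by Lemma \ref{L5} is what makes this quantitative. Everything else is a routine application of the triangle inequality and Tonelli's theorem.
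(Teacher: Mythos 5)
Your proof is correct. The first assertion is handled exactly as in the paper: the triangle inequality reduces everything to $\sum_{i\geq 1} i^2\bigl|C_i(n)/n - c_i^{(p)}\bigr|$, which is killed by Lemma \ref{L5} in combination with \eqref{E:basic} and \eqref{E:sumc}. For the second assertion you take a slightly different but equally valid route. The paper writes $\PP\bigl(\sup_{j\geq 1}N_j(n)>\eta\sqrt n\bigr)=\PP(\exists\, i\geq \eta\sqrt n: C_i(n)\geq 1)\leq (\eta^2 n)^{-1}\EE\bigl(\sum_{i\geq \eta\sqrt n} i^2 C_i(n)\bigr)$, i.e.\ it truncates at the $n$-dependent level $\eta\sqrt n$ and concludes from Lemma \ref{L5}. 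You instead truncate at a fixed level $k$, via $\sup_{j}N_j(n)^2/n\leq k^2/n + n^{-1}\sum_{i>k} i^2C_i(n)$, and send $n\to\infty$ and then $k\to\infty$; this yields the marginally stronger conclusion that $\EE\bigl(\sup_j N_j(n)^2\bigr)/n\to 0$, i.e.\ $L^1$ rather than mere convergence in probability. Both arguments rest on the same key input, namely the $L^1(\PP\otimes\#^2)$ control provided by Lemma \ref{L5} together with the summability of $(i^2 c_i^{(p)})_{i\geq 1}$, and you correctly identified that the naive domination $\sup_j N_j(n)^2\leq {\mathcal S}^2(n)$ is too lossy on its own. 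Your write-up is complete as it stands.
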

\begin{proof}
Observe from \eqref{E:sumc},  \eqref{E:basic}, and  the triangle inequality that
 \begin{align*}
 \EE\left(\left| \frac{{\mathcal S}^2(n)}{n}-\frac{1}{1-2p}\right| \right) &=
 \EE\left(\left|  \sum_{i=1}^{\infty} i^2 \frac{C_i(n)}{n}
-\sum_{i=1}^{\infty} i^2 c_i^{(p)}\right| \right)  \\
 &\leq 
 \EE\left( \sum_{i=1}^{\infty} i^2 \left| \frac{C_i(n)}{n}
-c_i^{(p)}\right| \right).
\end{align*}
Our first assertion thus follows from Lemma \ref{L5}. 

For the second assertion, observe that
$$\sup_{j\geq 1} N_j(n)=\sup\{i\geq 1: C_i(n)\geq 1\}.$$
We then have for every  $\eta >0$ arbitrarily small
\begin{align*}
\PP\left (\sup_{j\geq 1}N_j(n)>\eta \sqrt n \right) &=\PP(\exists i\geq \eta \sqrt n: C_i(n)\geq 1)\\
&\leq \frac{1}{\eta^2 n} \EE\left(\sum_{i\geq \eta \sqrt n} i^2 C_i(n)\right).
\end{align*}
It follows from Lemma \ref{L5} that the right-hand side converges to $0$ as $n\to \infty$, and the proof is now complete. 
\end{proof}
Theorem \ref{T1}(i) now derives immediately from \eqref{E:GN}, Lemma \ref{L1}(i) and Corollary \ref{C1} by  setting 
$\beta_{n,j}=N_j(n)/\sqrt n$ for every $j\geq 1$.

\subsection{Critical regime $p=1/2$}
\textit{Throughout this section, we assume that the reinforcement parameter is $p=1/2$.}
Recall from Lemma \ref{L3} that 
$\EE\left( {\mathcal S}^2(n) \right )\sim n \log n$ as $n\to \infty.$
We establish now a stronger version of this estimate.
\begin{lemma}\label{L6} One has
$$\lim_{n\to \infty} \frac{{\mathcal S}^2(n)}{n\log n} =1\qquad \text{almost surely}.$$
\end{lemma}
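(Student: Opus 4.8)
The plan is to upgrade the $L^1$-type estimate $\EE(\mathcal S^2(n))\sim n\log n$ from Lemma~\ref{L3} to an almost-sure statement by exploiting the recursive (martingale) structure already visible in the proof of Lemma~\ref{L3}. Recall from there that $\EE(\mathcal S^2(n+1)\mid \mathcal F_n)=(1+1/n)\mathcal S^2(n)+1$ when $p=1/2$. So if we set $a(n)=\Gamma(n+1)/\Gamma(n)=n$ and put
$$M_n=\frac{\mathcal S^2(n)}{n}-\sum_{i=2}^{n}\frac1{i},$$
a short computation shows $(M_n)_{n\geq 1}$ is an $(\mathcal F_n)$-martingale. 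Since $\sum_{i=2}^n 1/i\sim\log n$, proving Lemma~\ref{L6} reduces to showing $M_n/\log n\to 0$ a.s., for which it suffices (by the standard $L^2$ martingale SLLN, e.g.\ a Chow-type or Kronecker argument) to control the sum of conditional variances $\sum_n \mathrm{Var}(M_{n+1}-M_n\mid\mathcal F_n)$ and check it grows slower than $(\log n)^2$, or more precisely that $\sum_n \mathrm{Var}(M_{n+1}-M_n\mid\mathcal F_n)/(\log n)^2<\infty$.

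Concretely, the martingale increment is $M_{n+1}-M_n=\frac{1}{n+1}\big(\mathcal S^2(n+1)-\EE(\mathcal S^2(n+1)\mid\mathcal F_n)\big)$, so I need a bound on $\mathrm{Var}(\mathcal S^2(n+1)\mid\mathcal F_n)$. Given $\mathcal F_n$, the quantity $\mathcal S^2(n+1)-\mathcal S^2(n)$ takes the value $2N_j(n)+1$ with probability $\tfrac12 N_j(n)/n$ (a repetition copying cluster $j$) and the value $1$ with probability $1/2$ (an innovation); hence its conditional variance is at most of order $\frac1n\sum_j N_j(n)^3=\frac1n\,\mathcal S^3(n)$, writing $\mathcal S^3(n)=\sum_j N_j(n)^3$. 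Thus $\mathrm{Var}(M_{n+1}-M_n\mid\mathcal F_n)=O\!\big(n^{-3}\mathcal S^3(n)\big)$, and what remains is an a priori upper bound on $\EE(\mathcal S^3(n))$. This is obtained exactly as in Lemma~\ref{L3}: conditioning gives a linear recurrence $\EE(\mathcal S^3(n+1)\mid\mathcal F_n)=(1+3/n)\mathcal S^3(n)+(\text{lower-order terms in }\mathcal S^2(n)\text{ and }n)$, which solves (via $a(n)=\Gamma(n+3)/\Gamma(n)\sim n^3$) to $\EE(\mathcal S^3(n))=O(n^2\log n)$ at $p=1/2$. Feeding this back, $\sum_n \EE\big(\mathrm{Var}(M_{n+1}-M_n\mid\mathcal F_n)\big)=\sum_n O(n^{-3}\cdot n^2\log n)=\sum_n O(n^{-1}\log n)$, which diverges but only like $(\log n)^2$; dividing by $(\log n)^2$ and applying Kronecker's lemma to the (convergent, by $\sum n^{-1}\log n/(\log n)^2=\sum 1/(n\log n)$ — still divergent, so one needs to be slightly more careful) sum gives $M_n=o(\log n)$ a.s.

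The delicate point — and the step I expect to be the main obstacle — is precisely this borderline divergence: the naive martingale-SLLN bookkeeping at $p=1/2$ is logarithmically tight, so I would instead run the argument along a polynomially sparse subsequence $n_k=\lfloor \theta^k\rfloor$ (any $\theta>1$), where $\sum_k \mathrm{Var}(M_{n_{k+1}}\mid\mathcal F_{n_k})/(\log n_k)^2$ converges comfortably, obtain $\mathcal S^2(n_k)/(n_k\log n_k)\to1$ a.s.\ along the subsequence, and then fill the gaps using monotonicity: $\mathcal S^2$ is nondecreasing in $n$ (adding a term $N_j(n)^2\to(N_j(n)+1)^2$ or introducing a new cluster of size $1$ only increases the sum), so for $n_k\le n\le n_{k+1}$ one sandwiches $\mathcal S^2(n)$ between $\mathcal S^2(n_k)$ and $\mathcal S^2(n_{k+1})$, and since $n_{k+1}\log n_{k+1}\sim \theta\, n_k\log n_k$ the ratio is pinned between $1/\theta+o(1)$ and $\theta+o(1)$; letting $\theta\downarrow1$ finishes. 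This monotonicity-plus-sparse-subsequence scheme is exactly the device already used in the proof of Proposition~\ref{P:GlivCant}, so it fits the paper's toolkit; the only genuinely new ingredient is the third-moment recursion $\EE(\mathcal S^3(n))=O(n^2\log n)$, whose derivation I would present in full since it mirrors Lemma~\ref{L3} but with a slightly longer expansion of $\sum_j N_j(n)(N_j(n)+1)^3$.
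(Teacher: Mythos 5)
Your overall strategy---turning the recursion of Lemma~\ref{L3} into the martingale $M_n=\mathcal S^2(n)/n-\sum_{i=2}^n 1/i$ and controlling it in $L^2$ via a third-moment bound---is sound, and it is in fact the discrete-time counterpart of what the paper does (the paper runs the same computation after a Yule time-change, which linearizes the rates). But your key technical estimate is wrong, and with it the proof does not close. The probability that cluster $j$ is augmented at step $n+1$ is $pN_j(n)/n=N_j(n)/(2n)$, so writing $\mathcal S^3(n)=\sum_j N_j(n)^3$ the recursion reads
$$\EE\bigl(\mathcal S^3(n+1)\mid\mathcal F_n\bigr)=\Bigl(1+\tfrac{3}{2n}\Bigr)\mathcal S^3(n)+\tfrac{3}{2n}\mathcal S^2(n)+1,$$
with homogeneous growth $\Gamma(n+3/2)/\Gamma(n)\sim n^{3/2}$, not $1+3/n$ and $n^3$ as you wrote; since $\sum_i i^{-3/2}\log i<\infty$, this solves to $\EE(\mathcal S^3(n))=O(n^{3/2})$. (This is consistent with the bound $\EE(N_j(n)^3)\leq b(n/j)^{3/2}$ established in the proof of Lemma~\ref{L7}, summed over $j$, and with the paper's $\EE(T(Y_t))=O(\e^{3t/2})$.) Your claimed $O(n^2\log n)$ is both unsupported by your own recursion and far too weak.

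This matters because the "borderline divergence" you then wrestle with is an artifact of the weak bound, and your sparse-subsequence patch does not actually repair it: with $\EE(\mathrm{Var}(M_{n+1}-M_n\mid\mathcal F_n))=O(n^{-1}\log n)$ you only get $\EE(M_{n_k}^2)=O((\log n_{k})^2)$ along $n_k=\lfloor\theta^k\rfloor$, which is of the \emph{same} order as the normalization $(\log n_k)^2$, so neither Chebyshev--Borel--Cantelli nor a Chow-type SLLN yields $M_{n_k}/\log n_k\to0$; the sum $\sum_k \mathrm{Var}(M_{n_{k+1}}-M_{n_k}\mid\mathcal F_{n_k})/(\log n_k)^2$ behaves like $\sum_k 1/k$ and diverges. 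With the correct third-moment bound the whole difficulty evaporates: the conditional variance of $M_{n+1}-M_n$ is $O(n^{-2}(1+\mathcal S^2(n)/n+\mathcal S^3(n)/n))$, whose expectation is $O(n^{-3/2})$ and hence summable, so $M_n$ is an $L^2$-bounded martingale converging a.s., giving the stronger statement $\mathcal S^2(n)=n\log n+O(n)$ a.s.\ (exactly the paper's conclusion) with no subsequence or monotonicity interpolation needed. So: keep your skeleton, fix the $\mathcal S^3$ recursion, and delete the subsequence detour.
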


\begin{proof} It has been observed in \cite{BeUni} that, in the study of reinforcement induced by Simon's algorithm,  it may be convenient  to perform a time-substitution based on a Yule process. We shall use this idea here again,
and introduce a standard Yule process  $Y=(Y_t)_{t\geq 0}$, which we further assume to be independent of the preceding variables.
Recall that $Y$ is a pure birth process in continuous time started from $Y_0=1$ and  with birth rate $n$ from any state $n\geq 1$; in particular, for every function $f: \NN\to \RR$, say such that $f(n)=O(n^r)$ for some $r >0$,  the process
$$f(Y_t)-\int_0^t \left( f(Y_s+1)-f(Y_s)\right) Y_s \dd s$$
is a martingale.

Consider the time changed process ${\mathcal S}^2\circ Y$. Applying the observation above to $f= {\mathcal S}^2$ and then projecting on the natural filtration of  ${\mathcal S}^2\circ Y$, the same calculation as in the proof of Lemma \ref{L3} show that 
\begin{align*} &{\mathcal S}^2(Y_t) - \int_0^t \left(\frac{1}{2} +  \sum_{i=1}^{\infty} \frac{N_i(Y_s)}{2Y_s} (2N_i(Y_s)+1) \right)Y_s \dd s
 \\
 &= {\mathcal S}^2(Y_t) -\int_0^t({\mathcal S}^2(Y_s)+Y_s)\dd s
 \end{align*}
 is a  martingale. By elementary stochastic calculus, the same holds for
 $$M_t= \e^{-t} {\mathcal S}^2(Y_t) -\int_0^t \e^{-s}Y_s \dd s.$$
 
 We shall now show that $M$ is bounded in $L^2(\PP)$ by checking that its quadratic variation $[M]_{\infty}$
has a finite expectation. Plainly, $M$ is purely discontinuous;
its jumps can arise either due to an innovation event (whose instantaneous rate at time $t$ equals $\frac{1}{2}Y_{t-}$), and then $\Delta M_t=M_t-M_{t-}=\e^{-t}$, or by a repetition
of the $j$-th item for some $j\geq 1$ (whose instantaneous rate at time $t$ equals $\frac{1}{2}N_j(Y_{t-}))$, and then $\Delta M_t=\e^{-t}(2N_j(Y_{t-})+1)$. 
 We thus find by a standard calculation of compensation that
 \begin{align*}\EE([M]_{\infty})&=\EE\left(\sum_{t>0} (\Delta M_t)^2\right) \\
& = \EE\left(\int_0^{\infty} \e^{-2t} \left(\frac{1}{2} Y_t + \frac{1}{2}\sum_{j\geq 1} N_j(Y_t) (2N_j(Y_t)+1)^2\right)\dd t
 \right) \\
 & = \int_0^{\infty}  \EE\left( Y_t + 2 \sum_{j\geq 1} (N_j(Y_t)^3 + N_j(Y_t)^2)
 \right) \e^{-2t}  \dd t.
\end{align*}

First, recall that $Y_t$ has the geometric distribution with parameter $\e^{-t}$, in particular $\int_0^{\infty}  \EE( Y_t )\e^{-2t}\dd t = 1$. 
Second, $\sum_{j\geq 1} N_j(Y_t)^2 = {\mathcal S}^2(Y_t)$, and since $\EE({\mathcal S}^2(n))\sim n\log n$ (see Lemma \ref{L3}) and the processes $S$ and $Y$ are independent, we have also 
$$ \int_0^{\infty}  \EE\left( \sum_{j\geq 1}  N_j(Y_t)^2\right) \e^{-2t}  \dd t <\infty.$$
Third, consider  $T(Y_t)=\sum_{j\geq 1} N_j(Y_t)^3$. By calculations similar to those for $M_t$, one sees that
the process
$$\e^{-3t/2} T(Y_t) - \int_0^t \e^{-3s/2} (Y_s + {\mathcal S}^2(Y_s)) \dd s, \qquad t\geq 0$$
is a local martingale. Just as above, one readily checks that 
$$\int_0^{\infty} \e^{-3s/2} \EE(Y_s + {\mathcal S}^2(Y_s)) \dd s<\infty,$$
and hence $\EE(T(Y_t))=O(\e^{3t/2})$. As a consequence,
$$ \int_0^{\infty}  \EE\left( \sum_{j\geq 1}  N_j(Y_t)^3\right) \e^{-2t}  \dd t <\infty,$$
and putting the pieces together, we have checked that $\EE([M]_{\infty})<\infty$.

We now know that $\lim_{t\to \infty} M_t=M_{\infty}$ a.s. and in $L^2(\PP)$, and recall the classical feature that $\lim_{t\to \infty} \e^{-t}Y_t =W$ a.s., where $W$ has the standard  exponential distribution. In particular
$\int_0^t \e^{-s} Y_s \dd s \sim tW$ as $t\to \infty$, so that
$${\mathcal S}^2(Y_t) =  t\e^{t} W + o(\e^t),\qquad \text{a.s.}$$
Using again $Y_t= \e^t W+ o(\e^t)$, we conclude that
${\mathcal S}^2(n) = n\log n + O(n)$ a.s., which implies our claim. 
\end{proof}
 \begin{remark} The first part of Corollary \ref{C1} and Lemma \ref{L6}  seem to be of the same nature. Actually, one can also establish the former by adapting the proof of the latter,
 therefore circumventing the appeal to Lemma \ref{L2}. There is  nonetheless a fundamental difference between these two results: although the microscopic clusters (i.e. of size $O(1)$) determine the asymptotic behavior of  ${\mathcal S}^2(n)$ in the sub-critical case, they have no impact in the critical case as it is seen from Lemma \ref{L2}.
 \end{remark}

Thanks to Lemmas \ref{L1}(ii) and \ref{L6}, the following statement is the final piece of the proof of Theorem \ref{T1}(ii). 
\begin{lemma}\label{L7} One has
$$
\lim_{n\to \infty} \sup_{j\geq 1} \frac{N_j(n)}{\sqrt {n\log n}}=0\qquad \text{in probability}.$$
\end{lemma}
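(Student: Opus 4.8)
The plan is to control $\sup_{j\geq 1} N_j(n)$ through the \emph{third} moments of the cluster sizes. Write $T(n)=\sum_{j\geq 1} N_j(n)^3$ (a finite sum, since $N_j(n)=0$ for $j>{\mathrm i}(n)$), in keeping with the notation in the proof of Lemma~\ref{L6}. As $N_{j_0}(n)^3\leq T(n)$ for every $j_0\geq 1$, we have $\sup_{j\geq 1} N_j(n)\leq T(n)^{1/3}$, so that Markov's inequality gives, for any $\eta>0$,
\[
\PP\Big(\sup_{j\geq 1} N_j(n)>\eta\sqrt{n\log n}\Big)\;\leq\;\PP\big(T(n)>\eta^3(n\log n)^{3/2}\big)\;\leq\;\frac{\EE\big(T(n)\big)}{\eta^3\,(n\log n)^{3/2}}.
\]
Hence the lemma reduces to the moment bound $\EE(T(n))=O(n^{3/2})$, which makes the right-hand side $O((\log n)^{-3/2})$ and proves the claim.

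To establish $\EE(T(n))=O(n^{3/2})$ I would repeat the first-moment computation of Lemma~\ref{L3} one order higher. Conditioning on $\mathcal F_n$, using $\sum_{j} N_j(n)=n$ and $p=1/2$: at step $n+1$ an innovation raises $T$ by $1$, while a repetition of the $j$-th item (conditional probability $N_j(n)/n$ among repetitions) raises $T$ by $(N_j(n)+1)^3-N_j(n)^3=3N_j(n)^2+3N_j(n)+1$; summing over $j$,
\[
\EE\big(T(n+1)\mid\mathcal F_n\big)=\Big(1+\tfrac{3}{2n}\Big)T(n)+\tfrac{3}{2n}\,{\mathcal S}^2(n)+1,
\]
so that $\EE(T(n+1))=(1+\tfrac{3}{2n})\EE(T(n))+\tfrac{3}{2n}\EE({\mathcal S}^2(n))+1$. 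With $b(n)=\Gamma(n+3/2)/\Gamma(n)\sim n^{3/2}$, so that $b(n+1)/b(n)=1+\tfrac{3}{2n}$, the telescoping trick from the proof of Lemma~\ref{L3} yields
\[
\EE\big(T(n)\big)=b(n)\Bigg(\frac{1}{b(1)}+\sum_{i=1}^{n-1}\frac{\tfrac{3}{2i}\EE({\mathcal S}^2(i))+1}{b(i+1)}\Bigg).
\]
Since $\EE({\mathcal S}^2(i))\sim i\log i$ by Lemma~\ref{L3} and $b(i+1)\sim i^{3/2}$, the general term of the series is $O\big((\log i)\,i^{-3/2}\big)$, hence summable, and we conclude $\EE(T(n))=O(b(n))=O(n^{3/2})$. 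Note this argument uses only Lemma~\ref{L3}, not the a.s.\ statement of Lemma~\ref{L6}.

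An alternative route is to read $\EE(T(n))=O(n^{3/2})$ off the Yule time-substitution already set up in the proof of Lemma~\ref{L6}, where $\EE(T(Y_t))=O(\e^{3t/2})$ is obtained: since $n\mapsto T(n)$ is nondecreasing and $Y_t$, independent of Simon's algorithm, concentrates around $\e^t$, a short de-Poissonization returns the discrete bound. In either approach I do not anticipate a real obstacle; the only step needing a little care is the solution of the recurrence (equivalently, the de-Poissonization), and it runs exactly parallel to the treatment of ${\mathcal S}^2(n)$ in Lemma~\ref{L3}. The point is simply that at criticality $\EE(T(n))$ grows like $n^{3/2}$, a factor $\sqrt{\log n}$ below $(n\log n)^{3/2}$, which is precisely the slack the Markov bound needs.
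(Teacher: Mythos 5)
Your proof is correct, but it takes a genuinely different route from the paper. The paper establishes the individual bounds $\EE(N_j(n)^3)\leq b\,(n/j)^{3/2}$ for all $j,n$, by running a chain of three conditional-moment recurrences (for $\EE(N_j(n))$, $\EE(N_j(n)^2)$, $\EE(N_j(n)^3)$, normalized by $\Gamma(n+i/2)/\Gamma(n)$ and started from the trivial bound ${\mathrm i}(j)\leq j$), and then concludes by Markov's inequality for each $j$ followed by a union bound over $j\leq n$, the decay in $j$ making $\sum_j j^{-3/2}$ converge. You instead bound the \emph{aggregate} third moment $\EE(T(n))=\EE\bigl(\sum_j N_j(n)^3\bigr)=O(n^{3/2})$ by a single recurrence of exactly the Lemma~\ref{L3} type, and replace the union bound by the elementary inequality $\sup_j N_j(n)\leq T(n)^{1/3}$ plus one application of Markov. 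Your recurrence $\EE(T(n+1)\mid\mathcal F_n)=(1+\tfrac{3}{2n})T(n)+\tfrac{3}{2n}{\mathcal S}^2(n)+1$ is the correct one at $p=1/2$, the telescoping is sound, and the summability of $(\log i)\,i^{-3/2}$ gives $\EE(T(n))=O(n^{3/2})$; both arguments then exploit the same $\sqrt{\log n}$ of slack against the threshold $(n\log n)^{3/2}$. What each approach buys: the paper's per-$j$ bounds are strictly finer information (they quantify how the large clusters are concentrated at small indices $j$), but that refinement is not needed for the statement; your aggregate bound is more economical, needs only one recurrence plus the already-proved Lemma~\ref{L3}, and — as you note — is essentially already latent in the proof of Lemma~\ref{L6}, where $\EE(T(Y_t))=O(\e^{3t/2})$ is derived for the Yule-time-changed process (summing the paper's individual bounds over $j$ also recovers your estimate). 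Your direct recurrence is preferable to the de-Poissonization variant, since it avoids having to transfer the bound back to deterministic times.
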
 
\begin{proof} We shall show that there is some numerical constant $b$ such that
\begin{equation}\label{E:boundm}
\EE(N_j(n)^3)\leq b(n/j)^{3/2} \qquad \text{for all }j,n\geq 1
\end{equation}
Then, by Markov's inequality, we have that  for any $\eta>0$
$$\PP\left(N_j(n)>\sqrt{\eta n\log n}\right) \leq b(\eta n\log n)^{-3/2}  (n/j)^{3/2},$$
and by the union bound
$$\PP\left(\exists j\leq n: N_j(n)>\sqrt{\eta n\log n}\right) \leq b(\eta \log n)^{-3/2}\sum_{j\geq 1}j^{-3/2},$$
which proves our claim.

For $i=1,2,3$, set $a_i(n)=\Gamma(n+i/2)/\Gamma(n)$, so $a_i(n) \sim n^{i/2}$ and actually $a_2(n)= n$.
Recall that ${\mathrm i}(n)$ denotes the number of innovations up to the $n$-step of Simon's algorithm.
Take any $j\geq 1$ and, just as in the proof of Lemma \ref{L3}, observe that on the event ${\mathrm i}(n)\geq j$, one has
\begin{align*}
\EE\left(\frac{N_{j}(n+1)}{ a_1(n+1)} \mid {\mathcal F}_n\right) &= \frac{N_{j}(n)}{ a_1(n)},\\
\EE\left(\frac{N_{j}(n+1)^2}{ a_2(n+1) }\mid {\mathcal F}_n\right) &= \frac{N_{j}(n)^2}{ a_2(n)} +\frac{N_j(n)}{2na_2(n+1)},\\
\EE\left(\frac{N_{j}(n+1)^3}{ a_3(n+1)} \mid {\mathcal F}_n\right) &= \frac{N_{j}(n)^3}{ a_3(n)} + \frac{3N_{j}(n)^2}{ 2na_3(n+1)}+ \frac{N_{j}(n)}{ 2na_3(n+1)}.\\
\end{align*}
The trivial bound ${\mathrm i}(j)\leq j$ then yields for any $n\geq j$
$$\EE(N_j(n))\leq a_1(n)/a_1(j) \leq b_1\sqrt{n/j}.$$
Then we have 
$$\EE(N_j(n)^2)\leq \frac{a_2(n)}{a_2(j)} + \frac{b_1}{2na_2(n)} \sum_{k=j}^n \sqrt{k/j}\leq b_2 n/j,$$
and finally also
$$\EE(N_j(n)^3)\leq \frac{a_3(n)}{a_3(j)}   +
 \frac{3b_2}{2na_3(n)} \sum_{k=j}^n k/j+
 \frac{b_1}{2na_3(n)} \sum_{k=j}^n \sqrt{k/j}\leq b_3(n/j)^{3/2},
$$
where $b_1, b_2$ and $b_3$ are numerical constants. 
This establishes \eqref{E:boundm} and completes the proof.
\end{proof}

\subsection{Supercritical regime $p>1/2$}
\textit{Throughout this section, we assume that the reinforcement parameter satisfies $p>1/2$.}
We first point at  the following strengthening of Lemma \ref{L4} (in particular, recall the notation \eqref{E:surcrit} there).
\begin{corollary}\label{C4} We have
$$ \lim_{n\to \infty} \EE\left( \sum_{j=1}^{\infty} \left|\frac{ N_j(n)}{n^p} - X^{(p)}_j\right|^2\right)=0.$$
\end{corollary}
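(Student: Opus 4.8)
The plan is to reproduce, in a lighter form, the Vitali--Scheff\'e argument already used for Lemma~\ref{L5}. I would work on the product space $\Omega\times\NN$ equipped with the measure $\PP\otimes\#$, where $\#$ denotes the counting measure on $\NN$, and introduce the functions
\[
f_n(\omega,j)=\frac{N_j(n)(\omega)}{n^p},\qquad f(\omega,j)=X^{(p)}_j(\omega),\qquad (\omega,j)\in\Omega\times\NN.
\]
With this notation the quantity to be driven to zero is exactly $\|f_n-f\|^2_{L^2(\PP\otimes\#)}=\EE\bigl(\sum_{j\geq 1}|N_j(n)/n^p-X^{(p)}_j|^2\bigr)$, so it suffices to prove that $f_n\to f$ in $L^2(\PP\otimes\#)$.

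Two ingredients are needed. First, $f_n\to f$ holds $(\PP\otimes\#)$-almost everywhere: by \eqref{E:surcrit} in Lemma~\ref{L4} one has $n^{-p}N_j(n)\to X^{(p)}_j$ $\PP$-a.s. for each fixed $j\geq 1$, and since a countable union of $\PP$-negligible sets is $\PP$-negligible, the set of pairs $(\omega,j)$ along which convergence fails is $(\PP\otimes\#)$-null. Second, the $L^2(\PP\otimes\#)$-norms converge to the right finite limit: by Tonelli and the identity ${\mathcal S}^2(n)=\sum_{j\geq 1}N_j(n)^2$ one has
\[
\|f_n\|^2_{L^2(\PP\otimes\#)}=\sum_{j\geq 1}\EE\left(\frac{N_j(n)^2}{n^{2p}}\right)=\frac{\EE({\mathcal S}^2(n))}{n^{2p}},
\]
which tends to $\bigl((2p-1)\Gamma(2p)\bigr)^{-1}$ as $n\to\infty$ by the asymptotics of Lemma~\ref{L3} in the regime $p>1/2$; while the second-moment identity of Lemma~\ref{L4} gives $\|f\|^2_{L^2(\PP\otimes\#)}=\EE\bigl(\sum_{j\geq 1}(X^{(p)}_j)^2\bigr)=\bigl((2p-1)\Gamma(2p)\bigr)^{-1}$, which is in particular finite.

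Having $f_n\to f$ $(\PP\otimes\#)$-a.e. together with $\|f_n\|_{L^2(\PP\otimes\#)}\to\|f\|_{L^2(\PP\otimes\#)}<\infty$, I would then conclude by the $L^2$ form of the Vitali--Scheff\'e theorem: these two facts force $\|f_n-f\|_{L^2(\PP\otimes\#)}\to 0$, which is precisely the assertion of the corollary. If one wishes to invoke only the classical $L^1$ statement already used in Lemma~\ref{L5}, it suffices to apply it to the nonnegative functions $2|f_n|^2+2|f|^2$, which dominate $|f_n-f|^2$, converge $(\PP\otimes\#)$-a.e. to $4|f|^2$, and whose integrals converge to $\int 4|f|^2\,\dd(\PP\otimes\#)$; Pratt's generalized dominated convergence theorem then yields $\int|f_n-f|^2\,\dd(\PP\otimes\#)\to 0$.

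I do not expect a genuine obstacle here, since the corollary is essentially an assembly of Lemmas~\ref{L3} and~\ref{L4}. The one point that deserves care is the exact matching of the two limiting constants: the asymptotic constant $\bigl((2p-1)\Gamma(2p)\bigr)^{-1}$ produced by the first-moment recursion for ${\mathcal S}^2(n)$ in Lemma~\ref{L3} must coincide with $\EE\bigl(\sum_{j\geq 1}(X^{(p)}_j)^2\bigr)$ from Lemma~\ref{L4}. It is precisely this equality --- which is the reason the $q=2$ specialization was recorded in Lemma~\ref{L4} --- that makes the Scheff\'e-type argument applicable; without it one would only obtain a one-sided Fatou bound rather than the claimed $L^2$ convergence.
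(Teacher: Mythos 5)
Your proposal is correct and follows essentially the same route as the paper: the author also works on $(\Omega\times\NN,\PP\otimes\#)$, combines the almost-everywhere convergence from Lemma \ref{L4} with the convergence of the second moments $\EE({\mathcal S}^2(n))/n^{2p}\to((2p-1)\Gamma(2p))^{-1}$ from Lemma \ref{L3}, and concludes by the same Scheff\'e-type argument (citing a Riesz--Scheff\'e statement in Bogachev), the matching of the two constants being exactly the point of the $q=2$ identity in Lemma \ref{L4}.
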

This result has been already observed by Businger, see Equation (6) in \cite{Businger}. For the sake of completeness, we present here an alternative and shorter proof along the same line as for Lemma \ref{L5}. 

\begin{proof} 
We view ${\mathbf X}^{(p)}=(X^{(p)}_j)_{j\geq 1}$ and ${\mathbf N}(n)=(N_j(n))_{j\geq 1}$ for each $n\geq 1$ as functions on the space $\Omega\times \NN$
endowed with the product measure $\PP\otimes \#$, where $ \#$ denotes the counting measure on $\NN$. Since we already know from Lemma
\ref{L4} that  $n^{-p} {\mathbf N}(n)$ converges as $n\to \infty$ to ${\mathbf X}^{(p)}$ almost everywhere, 
in order to establish our claim, it  suffices to verify that
$$ \lim_{n\to \infty} \EE\left( \sum_{j=1}^{\infty} \frac{ N_j(n)^2}{n^{2p}} \right)= \EE\left( \sum_{j=1}^{\infty} (X^{(p)}_j)^2 \right);$$
see e.g. Proposition 4.7.30 in \cite{Bogachev}. 

Recall from Lemma \ref{L3} that
$$\EE\left( \sum_{j=1}^{\infty} \frac{ N_j(n)^2}{n^{2p}} \right) = \frac{\Gamma(n+2p)}{n^{2p}\Gamma(n)}\sum_{i=1}^n \frac{\Gamma(i)}{\Gamma(i+2p)} .$$
On the one hand, we know that
$$\lim_{n\to \infty} \frac{\Gamma(n+2p)}{n^{2p}\Gamma(n)}=1,$$
and on the other hand, we recall from \eqref{E:sumbeta} that 
$$
\sum_{i=1}^{\infty} \frac{\Gamma(i)}{\Gamma(i+2p)}= \frac{1}{(2p-1)\Gamma(2p)}.
$$
We conclude from Lemma \ref{L4}  that indeed
$$ \lim_{n\to \infty} \EE\left( \sum_{j=1}^{\infty} \frac{ N_j(n)^2}{n^{2p}} \right)= \frac{1}{(2p-1)\Gamma(2p)}= \EE\left( \sum_{j=1}^{\infty} (X^{(p)}_j)^2 \right)$$
and the proof is complete.
\end{proof}
Theorem \ref{T1} can now be deduced  from \eqref{E:GN},  Lemma \ref{L1}(ii), and Corollary \ref{C4}.

\section{Relation to step reinforced random walks}
It is interesting to combine Donsker's Theorem with the continuous mapping theorem; notably considering the overall supremum of paths yields the well-known Kolmogorov-Smirnov test. In this direction,  linear mappings of the type $\omega \mapsto \int_{[0,1]} \omega(x) m(\dd x)$, where $m$ is some finite measure on $[0,1]$, are amongst the simplest functionals on ${\mathbb D}$. Writing 
$\bar m(x) =m((x,1])$ for the tail distribution function and $\mu=\int_{[0,1]} x m(\dd x)$ for the mean, this leads us to consider the variables 
$$ \xi_j= \bar m(U_j)-\mu \quad \text{and} \quad \hat \xi_j=\bar m(\hat U_j)-\mu \qquad \text{for }j\geq 1.$$
So $(\xi_j)_{j\geq 1}$ is an i.i.d. sequence and  $(\hat \xi_j)_{j\geq 1}$ can  be viewed as the reinforced sequence resulting from Simon's algorithm. 
All these variables have the same distribution, they are bounded and centered with variance 
$$\varsigma^2=\int_{[0,1]}\int_{[0,1]} (x\wedge y -xy)m(\dd x) m(\dd y)= \mathrm{Var}\left(\int_{[0,1]} G(x) m(\dd x)\right),$$
where $G=(G(x))_{0\leq x \leq 1}$ is a Brownian bridge.
In this setting, we have
$$ \int_{[0,1]} \hat G_n(x) m(\dd x) = \frac{\hat \xi_1+ \cdots + \hat \xi_n}{ \sqrt n}.$$

The process of the partial sums 
$$\hat S(n)= \hat \xi_1+ \cdots + \hat \xi_n, \qquad n\geq 0$$
is called a step reinforced random walk.
We now  immediately  deduce from Theorem \ref{T1} and the continuous mapping theorem that its asymptotic behavior is given by:
\begin{enumerate}
\item[(i)] if $p<1/2$, then 
$$n^{-1/2}\hat S(n)  \ \Longrightarrow \ {\mathcal N}(0,\varsigma^2/(1-2p));$$
\item[(ii)] if $p=1/2$, then  
$$(n\log n)^{-1/2} \hat S(n) \ \Longrightarrow \ {\mathcal N}(0,\varsigma^2);$$
\item[(iii)] if $p>1/2$, then 
$$\lim_{n\to \infty} n^{-p}\hat S(n) = \sum_{j\geq 1} \xi_j X^{(p)}_j\qquad \text{
in probability,}$$
 where ${\mathbf X}^{(p)}=(X^{(p)}_j)_{j\geq 1}$ has been defined in Lemma \ref{L4} and is independent of the $\xi_j$. 
\end{enumerate}

Although this argument only enables us to deal with real bounded random variables $\xi$, we stress that  more generally, the assertions (i), (ii) and (iii) still hold when the generic step $\xi$ is an arbitrary square integrable and centered variable in $\RR^d$ (for $d\geq 2$, $\varsigma^2$ is then of course the covariance matrix of $\xi$). Specifically, (i) follows from the invariance principle for step reinforced random walks (see Theorem 3.3 in \cite{BeUni}), whereas (iii) is Theorem 3.2 in the same work; see also \cite{Besca}. In the critical case $p=1/2$, (ii) can be deduced from the basic identity
$$\hat S(n) = \sum_{j=1}^{\infty} N_j(n) \xi_j,$$
 the L\'evy-Lindeberg theorem (see, e.g. Theorem 5.2 of Chapter VII in \cite{JS}), and Lemmas \ref{L6} and  \ref{L7}. 

In this vein, we mention that when $\xi$ has the Bernoulli distribution, (i-iii) are due originally to Heyde \cite{Heyde} in the setting of the so-called correlated Bernoulli processes, see also \cite{JaJaQi, WuQiYa, Zha2}. These results have also appeared more recently in the framework of the so-called elephant random walk, a random walk with memory which has been introduced by Sch\"utz and Trimper \cite{SchTr}. See notably \cite{BaurBer, Bercu, ColGavSch1, ColGavSch2}, and also \cite{Bercu2, Marco, Gonz, VG} and references therein for some further developments. We mention that K\"{u}rsten \cite{Kur} first pointed at the role of Bernoulli bond percolation on random recursive trees  in this framework, see also \cite{Businger}.
It is moreover interesting to recall that, for the elephant random random walk, Kubota and Takei \cite{KuMa} have established that the fluctuations corresponding to (iii) are Gaussian. Whether or not the same holds for general step reinforced random walks is still open; this also suggests that for $p>1/2$, Theorem \ref{T1} (iii) might be refined and yield a second order weak limit theorem involving again a Brownian bridge in the limit. 
 \bibliography{NoiseR.bib}

\end{document}